\documentclass[10pt]{amsart}

\usepackage[latin1]{inputenc}
\usepackage{cite}
\usepackage{amsmath}
\usepackage{amsfonts}
\usepackage{amssymb}
\usepackage{ytableau}
\usepackage[mathscr]{eucal}
\usepackage{diagrams}
\usepackage{xy,color,graphicx,tikz-cd}
\usepackage{hyperref}
\usepackage{enumitem}
\usepackage{microtype}
\usepackage{tabularx}
\xyoption{all}


\newcommand{\wis}[1]{{\text{\em \usefont{OT1}{cmtt}{m}{n} #1}}}

\newcommand{\NN}{\mathbb{N}}
\let\SS\relax
\newcommand{\SS}{\mathbb{S}}

\newcommand{\CC}{\mathbb{C}}
\newcommand{\ZZ}{\mathbb{Z}}

\newcommand{\op}{\mathrm{op}}

\DeclareMathOperator{\M}{M}
\DeclareMathOperator{\lcm}{lcm}

\DeclareMathOperator{\Hom}{Hom}

\DeclareMathOperator{\pgl}{PGL}
\DeclareMathOperator{\aut}{Aut}
\DeclareMathOperator{\en}{End}
\DeclareMathOperator{\z}{Z}
\newcommand{\rep}{\wis{rep}}
\newcommand{\trep}{\wis{trep}}
\newcommand{\spec}{\wis{spec}}

\newcommand{\comm}{\mathsf{Comm}}
\newcommand{\sets}{\mathsf{Set}}
\newcommand{\azu}{\mathsf{Azu}}

\newcommand{\sh}{\mathsf{Sh}}
\newcommand{\psh}{\mathsf{PSh}}
\newcommand{\alg}{\mathsf{Alg}}
\newcommand{\zalg}{\mathsf{ZAlg}}

\let\max\relax
\newcommand{\max}{\mathrm{max}}

\newcommand{\tr}{\mathrm{tr}}

\newtheorem{definition}{Definition}[section]
\newtheorem{proposition}[definition]{Proposition}

\newtheorem{theorem}[definition]{Theorem}
\newtheorem{corollary}[definition]{Corollary}
\newtheorem{lemma}[definition]{Lemma}
\newtheorem{example}[definition]{Example}
\newtheorem{remark}[definition]{Remark}

\tikzset{
b/.style={bend left=10},
bb/.style={bend left},
cl/.style={outer sep=-1pt},
}

\title{Azumaya toposes}
\author{Jens Hemelaer} 
\address{Department of Mathematics, University of Antwerp \\ 
Middelheimlaan 1, B-2020 Antwerp (Belgium) \\ {\tt jens.hemelaer@uantwerpen.be}}
\thanks{The author is a Ph.D.\ fellow of the Research Foundation -- Flanders (FWO)}
 
\begin{document}
\maketitle

\begin{abstract}
In \cite{azureps}, many different Grothendieck topologies were introduced on the category of Azumaya algebras. Here we give a classification in terms of sets of supernatural numbers. Then we discuss the associated categories of sheaves and their topos-theoretic points, which are related to UHF-algebras. The sheaf toposes that correspond to a single supernatural number have an alternative description, involving actions of the associated projective general linear group.
\end{abstract}

\section{Introduction}

All algebras will be defined over the complex numbers, and algebra morphisms will be $\CC$-linear.

Let $R$ be a finitely presented algebra, not necessarily commutative, and let $C$ be a commutative algebra. Then we define a \emph{family of $n$-dimensional representations} of $R$, parametrized by $C$, to be a choice of degree $n$ Azumaya algebra $A$ over $C$, together with an algebra morphism $R \longrightarrow A$. This is motivated by \cite{llb-stacks}, where it was shown that the $C$-points of the representation stack
\[
[~\rep_n R ~/~ \pgl_n~]
\]
are precisely given by these morphisms $R \longrightarrow A$ with $A$ some Azumaya algebra of degree $n$ over $C$ (where $\rep_n R$ is the scheme parametrizing $n$-dimensional representations of $R$). So the information regarding families of $n$-dimensional representations, for varying $n$, is contained in the functor
\[
\alg(R,-) : \azu \longrightarrow \sets \qquad A \mapsto \alg(R,A).
\]
Here $\azu$ is the category of finitely generated Azumaya algebras and center-preserving algebra morphisms. Notice the similarity with the functor of points approach to (commutative) algebraic geometry, where schemes $X$ are interpreted as functors $X : \comm \longrightarrow \sets$ with $\comm$ the category of (finitely generated) commutative rings. More precisely, schemes have an interpretation in the category of $J$-sheaves on $\comm^\op$, denoted $\sh(\comm^\op,J)$, where $J$ can be the Zariski topology, or for example the \'etale or flat topology.

In this paper we want to develop an analogous framework for the functors $\alg(R,-)$ as above. In \cite{azureps}, the first steps were taken in this direction. Many different Grothendieck topologies $J_K$ on $\azu^\op$ were constructed, and it was shown that for a lot of these topologies, $\alg(R,-)$ is a sheaf, regardless of the algebra $R$. These Grothendieck topologies $J_K$ are constructed from a Grothendieck topology $J$ on commutative rings and a Grothendieck topology $K$ on the \emph{big cell} $\mathtt{D}$, i.e.\ the category with as objects the nonzero natural numbers and a unique morphism $m \to n$ whenever $n \mid m$. The assumptions are that every $K$-cover admits a finite subcover, and that
\begin{enumerate}
\item $J$ is finer than the \'etale topology, or
\item $J$ is finer than the Zariski topology and $K$ is multiplicatively closed.
\end{enumerate}

In Section \ref{topologies-on-big-cell}, we give an explicit description of those Grothendieck topologies $K$ on the big cell such that every $K$-cover admits a finite subcover. Note that these always have enough points, by Deligne's completeness theorem. We will consider two topologies on the set $\SS$ of supernatural numbers: the \emph{pcfb-topology} and the \emph{patch topology}. The subsets $S \subseteq \SS$ that are closed for the pcfb-topology correspond bijectively to the Grothendieck topologies $K$ on the big cell that have enough points. Moreover, every $K$-cover admits a finite subcover if and only if the associated subset $S \subseteq \SS$ is closed for the (stronger) patch topology.

In Section \ref{trivializing-topologies}, we look at the topologies $J_K$ that are \emph{trivializing}, i.e.\ such that every Azumaya algebra is $J_K$-locally given by matrix algebras. The obvious examples are the topologies $J_K$ with $J$ finer than the \'etale topology. But we also determine the Grothendieck topologies $K$ such that $J_K$ is trivializing, for $J$ the Zariski topology.

In Section \ref{points}, we show that the trivializing topologies $J_K$ have enough points whenever $J$ has enough points. More explicitly, we show that the family
\[
\mathrm{P}(J,K) = \{ \M_s(D) \mid s \in S \text{ and }D\text{ a }J\text{-local commutative algebra} \}
\]
is a separating family of points for $J_K$, where each $\M_s(D)$ is a certain union of matrix algebras over $D$, similar to the UHF-algebra associated to $s$.

If $J_K$ is moreover coarser than the maximal flat topology, then $\alg(R,-)$ is a $J_K$-sheaf, for $R$ a finitely generated, not necessarily commutative algebra. In this case, we can associate a topos to the algebra $R$: the slice topos
\[
\sh(\azu^\op,J_K) \slash \alg(R,-).
\]
For these toposes, we find the family of points
\[
\mathrm{P}_R(J,K) = \{ R \to \M_s(D) \mid s \in S \text{ and }D\text{ a }J\text{-local commutative algebra} \},
\]
which is again separating whenever $J$ has enough points. So the topos-theoretic points we associate to $R$ correspond to certain representations parametrized by $J$-local commutative algebras.

In Section \ref{pgl-action} we construct a projective general linear group $\pgl_s$ over the complex numbers, for each supernatural number $s \in \SS$. The construction is analogous to the construction of UHF-algebras as unions of matrix algebras. Moreover, there is a natural action of $\pgl_s$ on the UHF-algebra $\M_s(\CC)$ and this action satisfies a finiteness condition that will be important later on. The $\pgl_s$-actions satisfying this condition will be called \emph{continuous}.

After studying $\pgl_s$, we take a closer look at the topologies $J_K$ with $K$ the Grothendieck topology on $\mathtt{D}$ induced by the singleton $\{s\}$ (we write $J_K=J_s$). We show that there is an equivalence of categories
\[
\sh(\azu^\op,J_s) \simeq \pgl_s - \sh(\comm^\op,J).
\]
The left hand side is the category of $J_s$-sheaves on $\azu^\op$. The right hand side is the category of $J$-sheaves on $\comm^\op$, equipped with a continuous $\pgl_s$-action (the morphisms in the category are $\pgl_s$-equivariant sheaf morphisms). In particular, for $n$ a natural number, we can interpret $J_n$-sheaves on $\azu^\op$ as sheaves on $\comm^\op$ equipped with a $\pgl_n$-action (every possible action is continuous in this case). Another important case is when we take $s$ to be the maximal supernatural number
\[
s = \prod_{p} p^\infty.
\]
Then the Grothendieck topology $J_s$ is the maximal topology as introduced in \cite{azureps}. So sheaves for the maximal topology also have an interpretation in terms of equivariant sheaves on $\comm^\op$.

\section{Grothendieck topologies on the big cell}
\label{topologies-on-big-cell}

One poset that is closely related to the category of Azumaya algebras is the poset of natural numbers with (the opposite of) the division relation. For this category, we take as objects the strictly positive natural numbers, and a unique morphism $m \to n$ whenever $n \mid m$. We will call this category the big cell and denote it by $\mathtt{D}$.

In this section, we will give a classification of all Grothendieck topologies on $\mathtt{D}$ that have enough points, and determine which of these correspond to coherent subtoposes. The latter are precisely the Grothendieck topologies such that every covering sieve contains a finitely generated covering sieve, so these are the ones we need when constructing Grothendieck topologies on the category of Azumaya algebras.

The starting point is the equivalence 
\begin{equation}
\widehat{\mathtt{D}} \simeq \sh(\SS)
\end{equation}
from \cite[Theorem 1]{llb-covers}, where $\SS$ is the set of supernatural numbers, equipped with the \emph{localic topology}, which has as open subsets the ideals
\begin{equation}
(n_i)_{i \in I} = \{ s \in \SS : \exists i \in I \text{ with }n_i \mid s \}.
\end{equation}
Because $\mathtt{D}$ is a small category, the Grothendieck topologies on $\mathtt{D}$ correspond precisely to the subtoposes of $\widehat{\mathtt{D}}$, so they correspond to \emph{sublocales} of $\SS$, see \cite[IX.5, Corollary 6]{mm-sheaves}. In particular, the Grothendieck topologies with enough points correspond to the \emph{spatial sublocales}, which in turn are in bijective correspondence with the sober subspaces $S \subseteq \SS$, by \cite[IX.3, Corollary 4]{mm-sheaves}. So in order to classify the topologies with enough points, we give a characterization of the subspaces $S \subseteq \SS$ that are sober.

By \cite[C1, Lemma 1.2.5]{elephant-2}, $S$ is sober if and only if it satisfies the following property:
\begin{align} \label{criterion}
V \cap S \neq \varnothing \text{ for all locally closed }V\subseteq\SS\text{ containing }s \Rightarrow s \in S.
\end{align}
So we first give a description of the locally closed sets. Take $s \in \SS$ a supernatural number and $n \mid s$ a natural number. Then
\[
(n) \cap \overline{\{s\}}
\]
is locally closed. Conversely, for any locally closed set $V$ containing $s$, we can find a natural number $n \mid s$ such that
\[
(n) \cap \overline{\{s\}} \subset V.
\]
If we take $s,s' \in \SS$ and natural numbers $n \mid s$ and $n' \mid s'$, then
\[
\left((n) \cap \overline{\{s\}} \right)\cap \left((n') \cap \overline{\{s'\}}\right) = (\lcm(n,n')) \cap \overline{\{\gcd(s,s')\}},
\]
so this particular kind of subsets is a basis for some topology, which we will call the pcfb-topology (primewise convergence from below).

\begin{proposition}
A subspace $S \subseteq \SS$ is sober if and only if it is pcfb-closed.
\end{proposition}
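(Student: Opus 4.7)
The plan is to interpret both conditions as statements about the existence of a locally closed ``witness'' around each point not in $S$, and to observe that these two descriptions of such witnesses coincide. Concretely, the sobriety criterion \eqref{criterion} is, by contraposition, the statement: $s \notin S$ iff there exists a locally closed $V \ni s$ with $V \cap S = \varnothing$. Meanwhile, $S$ is pcfb-closed iff its complement is pcfb-open, which (by the basis description just given) amounts to: $s \notin S$ iff there exists a basis element $(n) \cap \overline{\{s'\}}$ containing $s$ which is disjoint from $S$.

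For the forward direction, I would assume $S$ is sober and fix $s \notin S$. Applying the sobriety criterion yields a locally closed $V \ni s$ disjoint from $S$. The cofinality statement already established in the excerpt (``for any locally closed $V$ containing $s$, we can find $n \mid s$ such that $(n) \cap \overline{\{s\}} \subset V$'') then refines $V$ to a pcfb-basis neighborhood $(n) \cap \overline{\{s\}}$ of $s$, still disjoint from $S$. Hence $\SS \setminus S$ is pcfb-open, i.e., $S$ is pcfb-closed.

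For the converse, assume $S$ is pcfb-closed and suppose $s \in \SS$ has the property that every locally closed $V \ni s$ meets $S$. If $s$ were not in $S$, pcfb-closedness would produce a basic pcfb-open $(m) \cap \overline{\{s'\}}$ containing $s$ and disjoint from $S$; but this set is in particular locally closed in $\SS$, contradicting the hypothesis. Hence $s \in S$, and $S$ satisfies \eqref{criterion}, so $S$ is sober.

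The argument is essentially a translation between two formulations of the same ``no locally closed neighborhood of $s$ separates $s$ from $S$'' condition, using the basis already identified. The only point that requires care is noting that the pcfb-basis neighborhoods of $s$ are cofinal among all locally closed neighborhoods of $s$; but this is exactly what was proved just before the proposition, so no new obstacle appears.
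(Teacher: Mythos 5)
Your proof is correct and follows essentially the same route as the paper: both reduce sobriety via criterion~(\ref{criterion}) to a statement about locally closed neighborhoods of $s$, and both use the already-established fact that the sets $(n)\cap\overline{\{s\}}$ with $n\mid s$ are cofinal among such neighborhoods to identify this with pcfb-closedness. The paper merely compresses this unwinding into two sentences, so no substantive difference remains.
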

\begin{proof}
By the criterion (\ref{criterion}), $S$ being sober is equivalent to the following statement: if $(n)\cap\overline{\{s\}}$ intersects $S$ nontrivially for all $n \in \NN_+$ with $n \mid s$, then $s \in S$. But this statement is also equivalent to $S$ being pcfb-closed.
\end{proof}

We say that $(s_k)_{k \in \NN}$ \emph{pcfb-converges} to a supernatural number $s$ if and only if
\begin{itemize}
\item $s_k \mid s$ for all $k \in \NN$;
\item for all $n \in \NN_+$ with $n \mid s$ there is a $k \in \NN$ such that $n \mid s_k$.
\end{itemize}
This notion gives a more concrete description for the pcfb-topology: a set $S$ is pcfb-closed if for any sequence in $S$, the pcfb-limit is again in $S$.

What are the Grothendieck topologies on $\mathtt{D}$ corresponding to a pcfb-closed subset $S \subseteq \SS$? It is enough to find one Grothendieck topology $K_S$ with enough points such that the topos-theoretic points of $\sh(\mathtt{D},J)$ are precisely $S$: we saw above that this topology is in fact unique.

\begin{definition} \label{def:K_S}
We say that a sieve $\{n_i \to n \}_{i \in I}$ \emph{contains} a supernatural number $s$ if and only if there is some $i \in I$ such that $n_i \mid s$. In this way, we will often identify a sieve $\{n_i \to n \}_{i \in I}$ with the open set $(n_i)_{i \in I} \subseteq \SS$.

Now we define $K_S(n)$ to be the set of sieves on $n$ that contain $(n)\cap S$.
\end{definition}

Under the above identification, the pullback of a sieve $L$ on $n$ along a morphism $f : m \to n$ is given by $(m) \cap L$.

\begin{proposition}
$K_S$ as above is a Grothendieck topology on $\mathtt{D}$, and its topos-theoretic points are precisely $S \subseteq \SS$.
\end{proposition}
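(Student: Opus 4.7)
My plan is to verify the three axioms of a Grothendieck topology directly, using the identification (introduced just before the proposition) of a sieve on $n$ with an open subset of $(n) \subseteq \SS$, under which pullback along $m \to n$ becomes intersection with $(m)$. The maximality axiom is immediate, since the maximal sieve on $n$ corresponds to $(n)$ itself, which trivially contains $(n) \cap S$. For stability, if $L \supseteq (n) \cap S$ and $m \to n$ is a morphism of $\mathtt{D}$ (so $n \mid m$ and $(m) \subseteq (n)$), then $(m) \cap L \supseteq (m) \cap S$. Transitivity reduces to a brief element chase: given $L \in K_S(n)$ and a sieve $M$ on $n$ such that $(n_i) \cap M \in K_S(n_i)$ for every $n_i \in L$, any $s \in (n) \cap S \subseteq L$ lies in some $(n_i)$, hence $s \in (n_i) \cap S \subseteq (n_i) \cap M \subseteq M$.

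For the statement about points I would combine two ingredients: the bijection between topos-theoretic points of $\widehat{\mathtt{D}}$ and supernatural numbers (inherited from the equivalence $\widehat{\mathtt{D}} \simeq \sh(\SS)$), and the standard characterization that a point $s \in \SS$ factors through the subtopos $\sh(\mathtt{D},K_S)$ if and only if every covering sieve $L \in K_S(n)$ with $n \mid s$ already contains $s$. The inclusion $S \subseteq \{\text{points}\}$ is immediate: if $s \in S$ and $n \mid s$, then $s \in (n) \cap S \subseteq L$ for every $L \in K_S(n)$. Conversely, if $s \notin S$, then pcfb-closedness of $S$ together with criterion~(\ref{criterion}) supplies a natural number $n \mid s$ with $(n) \cap \overline{\{s\}} \cap S = \varnothing$. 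I would then set $L := (n) \setminus \overline{\{s\}}$, which is open in $(n)$ (as $\overline{\{s\}}$ is closed in $\SS$) and contains $(n) \cap S$, so defines an element of $K_S(n)$; but $s \in \overline{\{s\}}$ forces $s \notin L$, witnessing that $s$ is not a point of $\sh(\mathtt{D},K_S)$.

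The main obstacle is conceptual rather than computational: one has to keep the dictionary between sieves and open subsets of $\SS$ straight, and to recognize that the contrapositive of pcfb-closedness furnishes precisely the locally closed set whose complement in $(n)$ is the separating covering sieve. Once that observation is made, the verification above is essentially forced.
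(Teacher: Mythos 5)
Your proposal is correct and follows essentially the same route as the paper: the same axiom checks via the sieve--open-set dictionary, and the same construction of the separating covering sieve $(n)\setminus\overline{\{s\}}$ (the paper's $(n)\cap(\SS\setminus\overline{\{s'\}})$) from pcfb-closedness to exclude points outside $S$. No gaps to report.
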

\begin{proof} We first check the three axioms of Grothendieck topologies.

\begin{enumerate}
\item It is clear that the maximal sieve on $n$ contains $S$.
\item Let $L$ be a sieve on $n$, so $L \supseteq (n)\cap S$, and let $f: m \to n$ be a morphism. Then $(m) \cap L \supseteq (m) \cap S$, so pullbacks of covering sieves are again covering sieves.
\item Let $M$ be a covering sieve on $n$, and let $L$ be a sieve on $n$ such that $(m) \cap L$ is a covering sieve for all $m \in M \cap \NN_+$. Then for any $s \in (n)\cap S$, we know that $s \in (n) \cap M$, so take a natural number $m_0 \in M$ with $n \mid m_0 \mid s$. Then $s \in (m_0) \cap L$ because of the assumption, but this means in particular that $s \in (n)\cap L$. So $L$ is a covering sieve.
\end{enumerate}
The family of points for $\sh(\mathtt{D},K_S)$ is given by the supernatural numbers $s \in \SS$ such that for all $n \mid s$ and $K_S$-covering sieves $L$ on $n$, there exists a natural number $n' \in L$ such that $n' \mid s$, in other words $s \in L$. So clearly all elements of $S$ are topos-theoretic points for the topology $K_S$. Now take $s' \notin S$. Because $S$ is pcfb-closed, we can find a natural number $n \mid s'$ such that $(n) \cap \overline{\{s'\}}$ lies in the complement of $S$. Now $(n) \cap (\SS \setminus \overline{\{s'\}})$ is a $K_S$-covering sieve on $n$ not containing $s'$, so $s'$ is not a topos-theoretic point for $K_S$.
\end{proof}

\begin{corollary}
There is a bijective correspondence
\[
S \mapsto K_S
\]
from pcfb-closed subspaces $S \subseteq \SS$ to Grothendieck topologies on $\mathtt{D}$ with enough points. Under this correspondence, $S$ is the space of points of $\sh(\mathtt{D},K_S)$ and there is a commutative diagram of geometric morphisms
\[
\begin{tikzcd}
\sh(\mathtt{D},K_S) \ar[r,"{\sim}"] \ar[d,hook] & \sh(S) \ar[d,hook] \\
\widehat{D} \ar[r,"{\sim}"] \ar[r] & \sh(\SS)
\end{tikzcd}.
\]
\end{corollary}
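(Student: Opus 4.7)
The plan is to combine the two preceding propositions with the Mac Lane--Moerdijk correspondence between subtoposes and sublocales that was recalled at the start of the section, so no genuinely new machinery will be needed.

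First I would address the bijectivity. Grothendieck topologies on the small category $\mathtt{D}$ correspond bijectively to subtoposes of $\widehat{\mathtt{D}}\simeq \sh(\SS)$ by \cite[IX.5, Corollary 6]{mm-sheaves}. Among these subtoposes, those with enough points correspond to spatial sublocales of $\SS$, which in turn are in bijection with the sober subspaces $S\subseteq\SS$ by \cite[IX.3, Corollary 4]{mm-sheaves}. The first proposition of this section identifies ``sober'' with ``pcfb-closed'', so composing all three bijections gives one between pcfb-closed subspaces $S\subseteq\SS$ and Grothendieck topologies on $\mathtt{D}$ with enough points. To identify this correspondence with $S\mapsto K_S$, I would invoke the second proposition, which shows that $K_S$ is a Grothendieck topology on $\mathtt{D}$ whose set of topos-theoretic points is precisely $S$; since the subtopos-with-enough-points attached to $S$ under the abstract bijection is uniquely determined by its space of points, it must coincide with $\sh(\mathtt{D},K_S)$.

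For the second assertion, that $S$ is the space of points of $\sh(\mathtt{D},K_S)$, there is nothing to do beyond recording what was proved in the second proposition together with the observation that, because $S$ is sober, it is already its own soberification.

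For the commutative diagram I would read off the geometric morphisms from the chain of identifications above. The right vertical arrow $\sh(S)\hookrightarrow\sh(\SS)$ is the subtopos inclusion associated to the sober subspace $S\subseteq\SS$. The left vertical arrow $\sh(\mathtt{D},K_S)\hookrightarrow\widehat{\mathtt{D}}$ is the subtopos inclusion given by the Grothendieck topology $K_S$. The two horizontal equivalences are \cite[Theorem 1]{llb-covers} on the bottom and its restriction to the matching subtoposes on top; commutativity then holds by construction, since the equivalence $\widehat{\mathtt{D}}\simeq\sh(\SS)$ transports the sublocale cut out by $K_S$ to the spatial sublocale $S$. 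The only real work is bookkeeping, and the main (very mild) obstacle is to verify that the horizontal equivalence on top is indeed the restriction of the bottom one, which is immediate once one notes that both subtoposes correspond to the same sublocale under \cite[IX.5, Corollary 6]{mm-sheaves}.
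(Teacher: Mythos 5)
Your proposal assembles the corollary exactly as the paper does: it is stated there without a separate proof, being precisely the combination of the subtopos--sublocale correspondence from \cite[IX.5, Corollary 6]{mm-sheaves} and \cite[IX.3, Corollary 4]{mm-sheaves} with the two preceding propositions (sober $=$ pcfb-closed, and $K_S$ having point set $S$), plus the uniqueness remark made just before Definition \ref{def:K_S}. The one step you leave tacit---that $\sh(\mathtt{D},K_S)$ actually has enough points, so that it is the spatial subtopos attached to $S$ rather than merely a subtopos with point set $S$---is equally tacit in the paper, and it holds because $K_S$-covering sieves are by definition exactly those detected by the points in $S$, i.e.\ the associated sublocale of $\SS$ is the spatial one determined by $S$.
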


We will be particularly interested in Grothendieck topologies $K$ on $\mathtt{D}$ such that every covering sieve contains a finitely generated covering sieve (these are the ones we use for constructing Grothendieck topologies on $\azu^\op$). We will prove that these are precisely the topologies $K_S$ with $S$ a \emph{patch} of $\SS$ (also called coherent subspace or spectral subobject). A counter-example is $K_S$ with $S = \SS \setminus \{1\}$. Then the sieve generated by all primes is a covering sieve on $1$, but it does not contain any other covering sieves, in particular no finitely generated ones.

\begin{definition}[\cite{hochster}]
A \emph{spectral space} is a topological space $X$ such that
\begin{enumerate}[label=(S{\arabic*})]
\item $X$ is $T_0$ and quasi-compact; \label{S1}
\item the quasi-compact open sets form a basis; \label{S2}
\item for $U$ and $V$ quasi-compact open, $U \cap V$ is again quasi-compact; \label{S3}
\item every non-empty irreducible subset has a generic point. \label{S4}
\end{enumerate}
For $X$ a spectral space, a \emph{patch} $Y \subseteq X$ is a subspace that is a spectral space, and such that $U \cap Y$ is a quasi-compact open for every quasi-compact open $U$ in $X$.
\end{definition}

It is easy to see that the quasi-compact opens in $\SS$ correspond to the finitely generated ideals
\[
(n_1,\dots,n_k) = \{ s \in \SS : \exists i \in \{1,\dots,k\} \text{ such that } n_i \mid s \}
\]
for $n_1,\dots,n_k \in \NN_+$. Now it is easy to show that $\SS$ is a spectral space (the $T_0$ property and axiom \ref{S4} follow from the fact that $\SS$ is sober).

\begin{proposition} \label{coherence-and-patches}
The following are equivalent:
\begin{enumerate}
\item Every $K$-covering sieve on some $n$ in $\mathtt{D}$ contains a finitely generated $K$-covering sieve.
\item $K = K_S$ for some patch $S \subseteq \SS$.
\end{enumerate}
\end{proposition}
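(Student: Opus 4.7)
The plan exploits the identification (Definition \ref{def:K_S}) of sieves on $n \in \mathtt{D}$ with open subsets of $(n) \subseteq \SS$: $K_S$-covering sieves correspond to opens containing $(n) \cap S$, and finitely generated sieves correspond to finitely generated (equivalently, quasi-compact) ideals of $\SS$.

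For $(2) \Rightarrow (1)$, let $S$ be a patch and $L$ a $K_S$-covering sieve on $n$. Writing $L = \bigcup_{m \in L \cap \NN_+} (m)$, we obtain an open cover of $(n) \cap S$ by the opens $(m) \cap S$ for $m \in L \cap \NN_+$. Since $(n)$ is quasi-compact open in $\SS$ and $S$ is a patch, $(n) \cap S$ is quasi-compact in $S$, so finitely many $m_1, \ldots, m_k$ already cover it. The sieve generated by $\{m_1, \ldots, m_k\}$ is then a finitely generated $K_S$-covering subsieve of $L$.

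For $(1) \Rightarrow (2)$, the hypothesis means $(\mathtt{D}, K)$ is a coherent site, so by Deligne's completeness theorem (already invoked in the introduction) $K$ has enough points, and by the earlier corollary $K = K_S$ for a unique pcfb-closed $S \subseteq \SS$. It remains to check $S$ is a patch. Given any open cover of $(n) \cap S$ by basic opens $(m_i)_{i \in I}$, we may replace each $m_i$ by $\lcm(n, m_i)$ (dropping $m_i$ if the intersection with $(n) \cap S$ is empty), yielding a $K_S$-covering sieve on $n$. By hypothesis this sieve contains a finitely generated covering sieve, which produces a finite subcover, so $(n) \cap S$ is quasi-compact. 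Consequently every quasi-compact open $U = (n_1, \ldots, n_k)$ of $\SS$ gives $U \cap S = \bigcup_j ((n_j) \cap S)$ quasi-compact open in $S$, and the basic opens $(n) \cap S$ form a basis of quasi-compact opens closed under finite intersection (since $(n) \cap (m) = (\lcm(n,m))$). Combined with sobriety of $S$ (already established for pcfb-closed subspaces), this verifies that $S$ is spectral and hence a patch of $\SS$.

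The main step to execute carefully is $(1) \Rightarrow (2)$: the appeal to Deligne's theorem to reduce to the form $K = K_S$, and then the reduction to covers indexed by multiples of $n$ when extracting quasi-compactness of $(n) \cap S$ from the finite-subcover property. The rest is routine bookkeeping via the dictionary between sieves on $\mathtt{D}$ and opens of $\SS$.
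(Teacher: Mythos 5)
Your proposal is correct and takes essentially the same route as the paper: $(2)\Rightarrow(1)$ by quasi-compactness of $(n)\cap S$ under the sieve--open dictionary, and $(1)\Rightarrow(2)$ via coherence plus Deligne's theorem to get $K=K_S$ for a pcfb-closed $S$, followed by extracting quasi-compactness of $(n)\cap S$ from the finite-subsieve hypothesis and concluding that $S$ is a patch. The only elided detail is the (trivial) verification that $\mathtt{D}$ has finite limits (products given by $\lcm$, terminal object $1$, equalizers for trivial reasons), which the paper checks explicitly before invoking coherence of $\sh(\mathtt{D},K)$.
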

\begin{proof}
\underline{$(1) \Rightarrow (2)$}. Suppose that every $K$-covering sieve contains a finitely generated $K$-covering sieve. If we show that $\mathtt{D}$ has all finite limits, then $\sh(\mathtt{D},K)$ is a coherent topos, see \cite[IX.11, p.519]{mm-sheaves}. It is enough to show that $\mathtt{D}$ has finite products and equalizers. But finite products are given by taking least common multiples (the terminal object is $1$), and equalizers exist for trivial reasons. Because $\sh(\mathtt{D},K)$ is a coherent topos, it has enough points by Deligne's completeness theorem. So we can find a pcfb-closed subset $S \subseteq \SS$ such that $K = K_S$. 

We claim that $(n) \cap S$ is quasi-compact for all $n \in \NN_+$. Indeed, let $\{U_i\}_{i \in I}$ be a family of open sets in $\SS$ covering $(n)\cap S$. Then for every $s \in (n) \cap S$ there is an
\[
n_s \in \bigcup_{i \in I} U_i
\]
with $n \mid n_s \mid s$ and $n_s \in \NN_+$. The sieve generated by the family $\{n \to n_s\}_{s \in S}$ is a $K_S$-covering sieve, and we can take a finite subset $S_0 \subseteq S$ such that $\{n \to n_s\}_{s \in S_0}$ still generates a $K_S$-covering sieve. By taking for each $s \in S_0$ some $i \in I$ such that $n_s \in U_i$, we can construct a finite subset $I_0 \subseteq I$ such that
\[
\bigcup_{i \in I_0} U_i
\]
still covers $(n) \cap S$.

It is now easy to see that the quasi-compact opens in $S$ are precisely the sets of the form $S \cap U$ with $U$ quasi-compact open in $\SS$. Moreover, we already know that $S$ is a sober space. From these two facts it is easy to prove that $S \subseteq \SS$ is a patch.

\underline{$(2) \Rightarrow (1)$}. Take $L$ a $K_S$-covering sieve on $n$. Then $L$ corresponds to an open set
\[
(n_i)_{i \in I} = \{ s \in \SS : \exists i \in I \text{ such that }n_i \mid s \}
\]
with $n_i \in \NN_+$ for $i \in I$. This can be seen as an infinite union of open sets $(n_i)$ in $X$ covering $(n) \cap S$. Because $(n)\cap S$ is quasi-compact, we can take a finite subset $I_0 \subseteq I$ such that
\[
\bigcup_{i \in I_0} (n_i)
\]
still covers $S$. But then the sieve corresponding to $(n_i)_{i \in I_0}$ is a finitely generated $K_S$-covering sieve on $(n)$.
\end{proof}

Let $X$ be a spectral space. In \cite{hochster}, Hochster considers the topology on $X$ with as a subbase the quasi-compact opens for the original topology and their complements. He calls this the \emph{patch topology}, and shows that the closed sets for this topology coincide with the patches of $X$ (this explains the name ``patch''). So in this way we get a more concrete characterization: the patches are the subsets that can be constructed from closed sets and finitely generated open sets, using finite unions and arbitrary intersections.

\begin{example} These are examples of patches $S \subseteq \SS$.
\begin{enumerate}[ref={\theexample.(\arabic*)}]
\item \emph{\underline{Finitely generated sieves.}} These are the sets of the form $S = (n_1,\dots,n_k)$ with $n_1, \dots, n_k \in \NN_+$. These correspond to open subtoposes and they are all dense, because they contain the generic point
\[
\prod_{p \in \mathcal{P}} p^\infty.
\]
\item \emph{\underline{Closed sets.}} We already know that the irreducible closed sets (for the original localic topology) are the sets of the form
\[
\overline{\{s\}} = \{ x \in \SS : x \mid s \}
\]
for $s$ an arbitrary supernatural number. However, because $\SS$ is not noetherian, there might be closed sets that can not be written as a finite union of point closures. The only dense closed set is of course $\SS$ itself.
\item \emph{\underline{Finite sets.}} \label{example-finite-sets}
Take a sequence of natural numbers $(n_i)_i$ with pcfb-limit $s \in \SS$. Then we can write
\[
\{s\} = \overline{\{s\}} \cap \bigcap_{i \in \NN} (n_i)_i
\]
and this shows that each singleton is a patch. As a corollary, each finite set is a patch.
\item \emph{\underline{Topologies from \cite{azureps}.}} Let $\Sigma \subseteq \mathcal{P}$ be a set of primes, and let $S_\Sigma$ be the set of multiples of
\[
s_{\Sigma} = \prod_{p \in \Sigma} p^\infty.
\]
Then $S_\Sigma$ is a patch because
\[
S_\Sigma = \bigcap_{\substack{p \in \Sigma \\ k \in \NN}} (p^k).
\]
This corresponds to the topology for which $\{n_i \to n \}_{i \in I}$ is a covering sieve if and only if $\frac{n_i}{n}$ has all prime divisors in $\Sigma$ for at least one $i \in I$. The set $S_\Sigma$ is dense for each choice of $\Sigma$. For $\Sigma = \varnothing$ we recover the chaotic (minimal) topology, for $\Sigma = \mathcal{P}$ we recover the atomic (maximal) topology.
\item \emph{\underline{Power set of the primes.}} \label{power-set-of-primes} Again let
\[
s_\Sigma = \prod_{p \in \Sigma} p^\infty
\]
for a set of primes $\Sigma$. Then the subset
\[
2^{\mathcal{P}} = 
\left\{  
s_\Sigma : \Sigma \subseteq \mathcal{P}
\right\}
\]
is a patch, because it can be written as the intersection
\[
2^{\mathcal{P}} = \bigcap_{p \in \mathcal{P},\,k \in \NN_+} \{ s : p^k \mid s \} \cup \{ s : p \nmid s \}.
\]
For a finite subset $X \subset \mathcal{P}$ one can look at
\[
U_X = \left\{ \Sigma \subseteq \mathcal{P} \text{ such that } X \subseteq \Sigma \right\}.
\]
These form a basis of open sets for the subspace topology.

\item \emph{\underline{The ring spectrum $\spec(\ZZ)$.}} For each prime $p$ we can consider the supernatural number
\[
s_p = \prod_{\substack{q \neq p \\ \text{prime}}} q^\infty.
\]
Then the subspace $\{ s_p \mid p \text{ prime} \} \cup \{ \prod_p p^\infty \}$ is homeomorphic to $\spec(\ZZ)$, as observed in \cite[Proposition 6]{llb-covers}. It is a patch because it can be written as
\[
\spec(\ZZ) = 2^{\mathcal{P}} \cap \bigcap_{\substack{p \neq q \\ \text{primes}}} (p,q),
\]
with $2^{\mathcal{P}}$ as above in Example \ref{power-set-of-primes}.
\end{enumerate}
\end{example}

By \cite[Theorem 6]{hochster}, each patch $S \subseteq \SS$ is homeomorphic to the spectrum of some commutative ring $R_S$ and each inclusion of patches $S \subseteq S'$ is induced by a ring morphism $R_{S'} \to R_S$. Note that in our case these rings are often non-noetherian, because the patches are non-noetherian as topological spaces.

Any finite $T_0$-space is Alexandrov-discrete for trivial reasons, so their topology is uniquely determined by the partial order given by specialization.

\begin{lemma}
Let $X$ be a finite $T_0$-space. Then $X$ is homeomorphic to a patch in $\SS$. 
\end{lemma}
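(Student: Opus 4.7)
The plan is to embed $X$ order-isomorphically into $(\SS,\mid)$ with finite image, and then invoke Example \ref{example-finite-sets} to obtain the patch property. The key input is that the specialization order on $\SS$ is divisibility, since $\overline{\{t\}} = \{x : x \mid t\}$; moreover, the topology of any finite $T_0$-space is determined by its specialization order, so it suffices to realize $X$ as a finite subset of $\SS$ whose restricted divisibility order reproduces the specialization order of $X$.

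To do this, fix an injection $x \mapsto p_x$ from $X$ into the set of primes, and set
\[
s_x \;=\; \prod_{y \leq x} p_y^\infty \;\in\; \SS,
\]
where $\leq$ denotes the specialization order of $X$. A short verification, using that $T_0$ makes down-sets detect points, shows that $s_x \mid s_{x'} \Leftrightarrow x \leq x'$, so $x \mapsto s_x$ is an order-isomorphism onto its image $Y := \{s_x : x \in X\}$. Being finite, $Y$ is a patch by Example \ref{example-finite-sets}.

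It remains to check that the subspace topology on $Y$ agrees with the Alexandrov topology induced by the restricted divisibility order --- the containment ``subspace open $\Rightarrow$ up-set'' being automatic. The natural candidate to realize an up-set $V = \{s_{x_1},\ldots,s_{x_k}\} \subseteq Y$ is $U = (p_{x_1},\ldots,p_{x_k})$ in $\SS$: each $s_{x_j}$ lies in $U$ because $p_{x_j} \mid s_{x_j}$, while any $s_{x'} \in U \cap Y$ would give $x_j \leq x'$ for some $j$, forcing $s_{x'} \in V$ by the up-set property.

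The main obstacle is purely conceptual: one needs to hit upon the encoding $x \mapsto \prod_{y \leq x} p_y^\infty$. Once this is in place, order preservation, injectivity, realization of up-sets, and the patch property all follow from short routine computations of the kind sketched above.
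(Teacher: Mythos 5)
Your proof is correct, and it takes a genuinely different route from the paper's. Both arguments reduce the statement to realizing the specialization order of $X$ as the divisibility order on a finite subset of $\SS$ and then quote Example \ref{example-finite-sets} to get the patch property; the difference lies in how the order embedding is produced. The paper embeds any finite poset into $\NN_+ \subseteq \SS$ by induction on the number of elements: a minimal element is sent to $2$, the elements above it are embedded into the even numbers and the remaining elements into the odd numbers, both subsets being order-isomorphic to $(\NN_+,\leq_{\mathrm{d}})$. Your map $x \mapsto s_x = \prod_{y \leq x} p_y^\infty$ is instead a single explicit formula (the classical encoding of a poset by its down-sets), which avoids the inductive bookkeeping of cross-relations between the two halves and lands inside the patch $2^{\mathcal{P}}$ of Example \ref{power-set-of-primes}; thus your realizing patch consists of completely infinite supernatural numbers, a small bonus in view of Section \ref{trivializing-topologies}, where exactly these are compatible with the Zariski topology, whereas the paper's realizing patch consists of natural numbers, which ties in with the remark on the topologies $J_X$ attached to subsets $X \subseteq \NN_+$ (note that your formula with exponent $1$ in place of $\infty$ would likewise give an embedding into $\NN_+$). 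You also verify explicitly that the subspace topology on the image is the Alexandrov topology of the induced divisibility order, by cutting out each up-set with the open ideal $(p_{x_1},\dots,p_{x_k})$; the paper leaves this step implicit (it follows from the general facts that a finite subspace carries the Alexandrov topology of the restricted specialization order and that specialization in $\SS$ is divisibility), so your version is the more self-contained of the two.
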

\begin{proof}
It is enough to show that any finite poset can be embedded into $\NN_+ \subseteq \SS$, seen as a partial order by defining
\[
n \leq_{\mathrm{d}} m \quad\Leftrightarrow\quad n \mid m.
\]
If a poset can be embedded into $(\NN_+,\leq_{\mathrm{d}})$, then it can also be embedded in
\[
A = \{n \in \NN_+ : 2 \mid n \} \quad\text{and}\quad
B = \{n \in \NN_+ : 2 \nmid n\}
\]
because these are both isomorphic to $(\NN_+,\leq_{\mathrm{d}})$ as posets. Now assume that any poset with $k$ elements can be embedded into $\SS$, and take a poset $P$ with $k+1$ elements. Pick a minimal element $x\in P$ and send it to $2$. Then take a poset embedding of $\{ y : y > x \}$ into $A$, and a poset embedding of $\{ y : y \not> x \}$ into $B$. Together, this gives an embedding of $P$ into $\SS$.
\end{proof}

\begin{remark}
In \cite{topologies-poset}, Grothendieck topologies on arbitrary posets are studied. In particular, it is shown that one can construct a Grothendieck topology $J_X$ for every subset $X$ of the poset. Moreover, these are all the possible Grothendieck topologies if and only if the poset is artinian \cite[Proposition~2.12]{topologies-poset}.

In our case, for a set $X$ of natural numbers, we can look at the pcfb-closure $S$ and then take the Grothendieck topology $K_S$ on $\mathtt{D}$, as in Definition \ref{def:K_S}. Then $K_S = J_X$ with $J_X$ defined as in \cite{topologies-poset}. Because the big cell is not artinian, there should be Grothendieck topologies that are not of this form, and these are indeed easy to find: take for example $S$ a singleton consisting of an infinite supernatural number.
\end{remark}

\begin{remark}
Supernatural numbers classify algebraic extensions of finite fields, or for example UHF-algebras. This can be explained conceptually by interpreting the big cell as a site. Because if a mathematical object can be defined for every natural number, then it is often in fact a presheaf on the big cell $\mathtt{D}$.

In the case of algebraic extensions of finite fields, we can look at a presheaf
\[
F(n) = \mathbb{F}_{q^n}
\]
with restriction morphisms $F(n) \to F(m)$ given by field inclusions. Then for each supernatural number $s \in \SS$ there is a stalk $F_s$, and this stalk corresponds to an algebraic extension of $\mathbb{F}_q$.

Similar constructions will be used in Section \ref{points} and in Section \ref{pgl-action}, when we associate matrix algebras or projective general linear groups to each supernatural number.
\end{remark}

\section{Trivializing Grothendieck topologies on Azumaya algebras}
\label{trivializing-topologies}

In the following, $\comm$ will be the category of finitely generated commutative rings, and $\azu$ will be the category of finitely generated Azumaya algebras and center-preserving algebra morphisms. So if $A$ is a finitely generated Azumaya algebra with center $C$, and $B$ is a finitely generated Azumaya algebra with center $D$, then a morphism in $\azu$ is an algebra morphism
\[
f : A \to B
\]
such that $f(C) \subseteq D$. Each finitely generated Azumaya algebra has a finitely generated center. To see this, note that the center is $\CC[\rep_n A]^{\mathrm{GL}_n}$ and use Hilbert's theorem on invariant subrings. Alternatively, use the (noncommutative) Artin--Tate lemma \cite[Lemma~13.9.10]{robson}. So taking centers gives a functor
\[
\z : \azu \longrightarrow \comm.
\]

Let $J$ be a Grothendieck topology on $\comm^\op$, and let $K$ be a Grothendieck topology on $\mathtt{D}$. Recall from \cite{azureps} that a sieve $S$ on an Azumaya algebra $A$ is called a \emph{$J_K$-covering sieve} if
\[
\pi_K(S) = \{ f : C \to D \text{ such that } f \text{ is centrally covered w.r.t.\ }K\text{ and }S \}
\]
is a $J$-covering sieve. Here $f$ is \emph{centrally covered} if $A \otimes_C D$ is of constant degree $n$ and there is a $K$-covering sieve consisting only of numbers $m \in \NN_+$ that are \emph{represented} on $f$, i.e.\ such that $S$ contains a central extension of $A \otimes_C D$ that is of degree $m$ over its center $D$.

Suppose that every $K$-covering sieve can be refined to a finitely generated covering sieve, and that we are moreover in the following two cases:
\begin{enumerate}
\item $J$ is finer than the \'etale topology, or
\item $J$ is finer than the Zariski topology and $K$ is stable under multiplication.
\end{enumerate}
Then $J_K$ is a Grothendieck topology on $\azu^\op$, see \cite[Theorem~2.7]{azureps}. So in this situation we will call $J_K$ a \emph{combined Grothendieck topology}.

Section \ref{topologies-on-big-cell} allows us to reformulate some of the above conditions. In Proposition \ref{coherence-and-patches} we characterized the Grothendieck topologies $K$ on $\mathtt{D}$ such that all $K$-covering sieves contain a finitely generated $K$-covering sieve: these are precisely the Grothendieck topologies of the form $K=K_S$ for a \emph{patch} $S \subseteq \SS$. Moreover, it is easy to see that $K$ is stable under multiplication if and only if $ks \in S$ implies $s \in S$ for all $k \in \NN_+$ and $s \in \SS$.

For some of the Grothendieck topologies $J_K$ one could say that all Azumaya algebras are $J_K$-locally given by matrix algebras. We can make this idea precise in a few different ways, that turn out to be equivalent.  

\begin{proposition}
Let $J_K$ be a combined Grothendieck topology. Then the following are equivalent:
\begin{enumerate}
\item for every Azumaya algebra $A$, there is a family of morphisms
\[
\{ A \to \M_{n_i}(C_i) \}_{i \in I}
\]
generating a $J_K$-covering sieve;
\item for every Azumaya algebra $A$, there is a family of morphisms
\[
\{ A \to \M_{n_i}(C_i) \}_{i \in I}
\]
such that the induced morphism
\[
\bigsqcup_{i \in I} \azu(\M_{n_i}(C_i), -) \longrightarrow \azu(A,-)
\]
is a $J_K$-epimorphism (after sheafification);
\item for every Azumaya algebra $A$ and $J_K$-covering sieve $S$ on $A$, the sieve $S$ contains a $J_K$-covering sieve generated by a family
\[
\{ A \to \M_{n_i}(C_i) \}_{i \in I}.
\]
\end{enumerate}
\end{proposition}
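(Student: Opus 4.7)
The plan is to establish $(3) \Rightarrow (1)$, the equivalence $(1) \Leftrightarrow (2)$, and $(1) \Rightarrow (3)$. The first is immediate by applying $(3)$ to the maximal sieve on $A$. For $(1) \Leftrightarrow (2)$, I would invoke the standard topos-theoretic translation on the site $(\azu^\op, J_K)$: the sieve on $A$ generated by a family $\{g_i : A \to \M_{n_i}(C_i)\}_{i \in I}$ coincides, as a subfunctor of $\azu(A,-)$, with the image of the presheaf morphism $\bigsqcup_i \azu(\M_{n_i}(C_i), -) \to \azu(A,-)$, and such a sieve is $J_K$-covering precisely when that subfunctor sheafifies to all of $\azu(A,-)$, i.e.\ exactly when the morphism is a $J_K$-epimorphism after sheafification.

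The main content is the implication $(1) \Rightarrow (3)$. Given a $J_K$-covering sieve $S$ on $A$, let $T \subseteq S$ denote the sub-sieve generated by those morphisms in $S$ whose codomain is a matrix algebra. I would prove that $T$ is $J_K$-covering by two applications of the transitivity axiom. By $(1)$, choose a trivializing family $\{g_i : A \to \M_{n_i}(C_i)\}_{i \in I}$ for $A$, generating a $J_K$-covering sieve $U$. As a first step, $S \cap U$ is $J_K$-covering: for each $f \in U$ the pullback $f^\ast(U)$ is maximal (sieves are closed under post-composition), so $f^\ast(S \cap U) = f^\ast(S)$ is covering by pullback-stability, and transitivity gives the conclusion. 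Next, for each $f : A \to B$ in $S \cap U$, apply $(1)$ to the Azumaya algebra $B$ to obtain a trivializing family $\{h_j : B \to \M_{m_j}(D_j)\}_{j \in J_f}$ generating a $J_K$-covering sieve on $B$. Each composite $h_j \circ f$ lies in $S$ (because $f \in S$ and $S$ is closed under post-composition) and has matrix-algebra codomain, so $h_j \circ f$ belongs to $T$; equivalently, $h_j \in f^\ast(T)$. Hence $f^\ast(T)$ is $J_K$-covering on $B$, and a second application of the transitivity axiom, now with outer covering sieve $S \cap U$ on $A$, yields that $T$ is $J_K$-covering, as required.

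The main obstacle I anticipate is purely the bookkeeping around these two transitivity applications; no essentially new idea is needed. The key ingredient is that $(1)$ is asserted uniformly for \emph{every} Azumaya algebra, so that it can be invoked both at $A$ itself and at each codomain $B$ arising in $S \cap U$. The specific role of matrix algebras is that the arrows $h_j$ produced in the second step land again in matrix algebras, so that each composite $h_j \circ f$ remains a legitimate generator of $T$.
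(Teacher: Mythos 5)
Your argument is correct and takes essentially the same approach as the paper: apply condition $(1)$ at the codomain of each morphism of the covering sieve and conclude by transitivity that the sub-sieve of $S$ generated by morphisms into matrix algebras is $J_K$-covering, the other implications being formal. Your intermediate step of intersecting $S$ with the trivializing sieve $U$ on $A$ is harmless but unnecessary, since $(1)$ can be invoked directly at the codomain of every morphism in $S$ itself.
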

\begin{proof}
$(3) \Rightarrow (1) \Leftrightarrow (2)$ This is clear.

$(1) \Rightarrow (3)$ Suppose $S = \{ A \to A_i \}_{i \in I}$. Then take for each $A_i$ a family $$\{ A_i \to \M_{n_{ir}}(C_{ir}) \}_{r \in R_i}$$ generating a $J_K$-covering sieve. Then $\{ A \to \M_{n_{ir}}(C_{ir}) \}$ generates a $J_K$-covering sieve on $A$.
\end{proof}

\begin{definition}
A combined Grothendieck topology $J_K$ satisfying the equivalent conditions above will be called \emph{trivializing}.
\end{definition}

Note that $J_K$ is always trivializing whenever $J$ is finer than the \'etale topology. If $J$ is the Zariski topology, then we need more restrictions on the Grothendieck topology $K$. 

Indeed, let $A = (x,y)_n$ be the cyclic algebra on $C=\CC[x,y,x^{-1},y^{-1}]$, then it is of degree $n$ and of period $n$ (the period is the order in the Brauer group). Moreover, because $\CC[x,y,x^{-1},y^{-1}]$ is the coordinate ring of a nonsingular and connected variety, the period of $A$ is still $n$ after Zariski localization: the restriction morphism $\wis{Br}(\CC[x^\pm,y^\pm]) \to \wis{Br}(\CC(x,y))$ is injective. So if $f: C \to D$ is some Zariski localization, and $A \otimes_C D \to \M_k(D)$ is a center-preserving morphism, then $\M_k(D) \cong (A \otimes_C D) \otimes_D B$ for some Azumaya algebra $B$. This $B$ is of opposite Brauer class, so it has again period $n$. Because the period divides the degree, we find $n^2 \mid k$. So let $K = K_S$ for some patch $S \subseteq \SS$. Then in order for $\textrm{Zar}_K$ to be trivializing, we need $n \mid s \Rightarrow n^2 \mid s$ for all $s \in S$. In other words, $S$ contains only supernatural numbers of the form
\[
s_\Sigma = \prod_{p \in \Sigma} p^\infty
\]
with $\Sigma$ some set of primes. In the special case $\Sigma = \varnothing$ we set $s_\varnothing = 1$. The supernatural numbers of the form $s_\Sigma$ will be called \emph{completely infinite}.

Conversely, suppose that $K=K_S$ with $S \subseteq \SS$ a patch consisting of only completely infinite supernatural numbers. Then for all $n,k \in \NN_+$ we have that $\{n^k \to n\}$ generates a $K$-covering sieve. We will use a result of Bass which says that every Azumaya algebra of degree $n$ can be embedded into a matrix algebra of degree $n^k$ for some $k$, see Proposition 6.1 and the remark afterwards in \cite{bass-lectures}. Now take $A$ an Azumaya algebra with center $C$ and let $f : C \to D$ be a morphism such that $A \otimes_C D$ is of constant degree $n$. Then we can take a central extension $A \otimes_C D \subseteq \M_{n^k}(D)$ for some $k \in \NN_+$. The sieve generated by these central extensions for each such $f$ is clearly a $J_K$-covering sieve, so $J_K$ is trivializing. We summarize the above in the following proposition.

\begin{proposition}
Let $\mathrm{Zar}$ be the Zariski topology and consider $K=K_S$ as in Section \ref{topologies-on-big-cell}, with $S \subseteq \SS$ a patch. Then
$\mathrm{Zar}_K$ is trivializing if and only if $S$ only contains \emph{completely infinite} supernatural numbers, i.e.\ if and only if each $s \in S$ can be written as
\[
s = \prod_{p \in \Sigma} p^\infty
\]
for some set $\Sigma$ of primes.
\end{proposition}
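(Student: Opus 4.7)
The proposition has two implications, and I would prove each by combining classical Brauer-theoretic input with the characterization of $K_S$-covers from Section~\ref{topologies-on-big-cell}.

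For the forward direction, I would produce a single obstructing test algebra: the cyclic algebra $A = (x,y)_n$ of degree $n$ over $C = \CC[x^{\pm},y^{\pm}]$. The plan is to show that if $\mathrm{Zar}_{K_S}$ is trivializing, then for every $s \in S$ and every $n \in \NN_+$ with $n \mid s$ one also has $n^2 \mid s$; iterating forces every prime dividing $s$ to do so with infinite multiplicity, which is exactly the completely infinite condition. The key input is that the Brauer class of $A$ has period exactly $n$ and, since $C$ is the coordinate ring of a smooth connected variety, the restriction $\mathrm{Br}(C) \to \mathrm{Br}(\CC(x,y))$ is injective, so the period stays equal to $n$ after any Zariski localization $f : C \to D$. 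Any center-preserving morphism $g : A \otimes_C D \to \M_k(D)$ then induces a Brauer splitting $\M_k(D) \cong (A \otimes_C D) \otimes_D B$ for some Azumaya algebra $B$ of opposite Brauer class, which still has period $n$, so its degree is a multiple of $n$, forcing $n^2 \mid k$. Tracing this through the definition of $\pi_{K_S}$ and Definition~\ref{def:K_S} yields the desired divisibility conclusion on $S$.

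For the converse, I would invoke Bass's embedding theorem \cite{bass-lectures}: every Azumaya algebra $A'$ of degree $m$ over its center $D$ admits a central embedding $A' \hookrightarrow \M_{m^k}(D)$ for some $k$ depending on $A'$. Given an arbitrary Azumaya algebra $A$ with center $C$, first take a Zariski cover $\{C \to D_i\}$ along which $A \otimes_C D_i$ has constant degree $n_i$, and apply Bass's theorem to produce central extensions $A \otimes_C D_i \hookrightarrow \M_{n_i^{k_i}}(D_i)$. Because each $s \in S$ is completely infinite, $n_i \mid s$ implies $n_i^{k_i} \mid s$ for all $k_i$, so the sieve generated by $\{n_i \to n_i^{k_i}\}$ is $K_S$-covering on $n_i$. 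Composing the Zariski cover with these central extensions then yields a $\mathrm{Zar}_{K_S}$-covering sieve on $A$ consisting of morphisms into matrix algebras, which verifies condition (1) of the characterization of trivializing topologies.

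The main obstacle is the forward direction, specifically the Brauer-theoretic computation: one must justify carefully that $(x,y)_n$ has period exactly $n$ over $\CC[x^{\pm},y^{\pm}]$ and that this period is preserved under arbitrary Zariski localization, before one can extract the divisibility $n^2 \mid k$ and feed it into the combinatorics of covering sieves on $\mathtt{D}$. Everything else is bookkeeping with the definitions of $\mathrm{Zar}_K$-cover, central covering, and $K_S$, together with a direct application of Bass's theorem.
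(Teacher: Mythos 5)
Your proposal follows essentially the same route as the paper: the forward direction uses the same test algebra $(x,y)_n$ over $\CC[x^{\pm},y^{\pm}]$ with the same period/degree argument (injectivity of $\mathrm{Br}(\CC[x^{\pm},y^{\pm}])\to\mathrm{Br}(\CC(x,y))$, opposite class of period $n$, hence $n^2\mid k$, hence $n\mid s\Rightarrow n^2\mid s$), and the converse uses Bass's embedding theorem exactly as in the paper, with only the cosmetic difference that you first pass to a Zariski cover trivializing the degree rather than quantifying over all base changes of constant degree. The argument is correct as sketched (modulo the minor notational slip that the covering sieve on $n_i$ is generated by $n_i^{k_i}\to n_i$, not $n_i\to n_i^{k_i}$, in the big-cell convention).
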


For any patch $S \subseteq \SS$, the subset of completely infinite elements is again a patch: it is the intersection with $2^\mathcal{P}$ from Example \ref{power-set-of-primes}.

\section{Topos-theoretic points}
\label{points}

Points of a topos are usually defined as certain flat functors or geometric morphisms, but they can also be interpreted as pro-objects with respect to the site. This is proven in \cite[Prop.\ 7.13]{johnstone}, under the assumption that the site has fiber products. This assumption does not hold for $\azu^\op$ (for an explicit proof, see \cite[Section~2]{azureps}). Fortunately, as remarked by Gabber and Kelly in \cite{gabber-kelly}, the fiber product condition is not necessary.

This means that in our case, the points of $\psh(\azu^\op)$ correspond precisely to the ind-objects in $\azu$. Moreover, this category of ind-objects can be embedded fully faithfully into the category $\zalg$ of algebras and center-preserving algebra morphisms. Indeed, we have a natural isomorphism
\[
\zalg(\varinjlim_i A_i, \varinjlim_j B_j)
\simeq \varprojlim_i \varinjlim_j \azu(A_i,B_j),
\]
because each $A_i$ and $B_j$ is finitely generated.

One notable area where inductive limits of matrix algebras play a role, is the theory of $C^*$-algebras. Recall that an UHF-algebra is the closure of
\[
\bigcup_{i \in \NN} \M_{n_i}(\CC)
\]
for some chain
\[
\M_{n_1}(\CC) \to \M_{n_2}(\CC) \to \dots \to \M_{n_i}(\CC) \to \dots
\]
where the morphisms are algebra morphisms (so they are injective). It is well-known that UHF-algebras are classified by their associated supernatural number $s$, which is uniquely determined by
\[
n \mid s \Leftrightarrow \exists i\in\NN \text{ with } n \mid n_i.
\]
One can check that the same classification holds for the algebras of the form $\bigcup_{i \in \NN} \M_{n_i}(\CC)$, i.e.\ without taking the closure, so we will denote these by $\M_s(\CC)$, where $s$ is the associated supernatural number.

\begin{remark}
Note that not every ind-Azumaya with center $\CC$ is a union of a countable chain of matrix algebras. If $\{V_i\}_{i \in I}$ is an infinite set of finite dimensional vector spaces, then the matrix algebras
\[
\en\left( \bigotimes_{j \in J} V_j \right)
\]
for $J \subset I$ finite, form a filtered diagram. It is easy to see that these often do not correspond to UHF-algebras.
\end{remark}

An ind-Azumaya algebra $B$ is a point for $\sh(\azu^\op,J_K)$ if and only if it is $J_K$-local, i.e.\ if and only if for each $A \to B$ and covering sieve $\{A \to A_i \}_{i \in I}$ there is an $i \in I$ and a factorization
\begin{equation*} \label{eg:local-for-JK}
\begin{tikzcd}
 & A_i \ar[rd,dashed] & \\
 A \ar[ru] \ar[rr] & & B
\end{tikzcd}.
\end{equation*}
But before we can say anything about the points of the Azumaya toposes, we need the following proposition, which is straightforward but was not found in the literature.

\begin{proposition} \label{prop:topologies-on-comm}
Let $J$ be a Grothendieck topology on $\comm^\op$ with enough points. Take $C$ in $\comm^\op$ and let $L$ be a sieve on $C$. Then $L$ is a $J$-covering sieve if and only if for every $J$-local ring $D$ and morphism $C \to D$ there is some $C \to C'$ in $L$ and a factorization
\[
\begin{tikzcd}
 & C' \ar[rd,dashed] & \\
 C \ar[ru] \ar[rr] & & D
\end{tikzcd}.
\]
\end{proposition}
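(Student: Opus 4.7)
The ``only if'' direction is a direct unwinding of the definition of a $J$-local ring: a map $C \to D$ into a $J$-local ring must, by definition, factor through every $J$-covering sieve on $C$.

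For the ``if'' direction, the approach is to translate the problem into a stalk calculation. A sieve $L$ on $C$ is a $J$-covering sieve precisely when the inclusion $L \hookrightarrow y(C)$ becomes an isomorphism after sheafification $a : \widehat{\comm^\op} \to \sh(\comm^\op, J)$, i.e.\ when $aL = ay(C)$. Because $J$ has enough points, such an equality of subsheaves can be tested on stalks: $aL = ay(C)$ if and only if $p^{\ast}(aL) = p^{\ast}(ay(C))$ for every point $p$ of $\sh(\comm^\op, J)$.

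The next step is to identify points with $J$-local rings. By (the appropriate version of) Diaconescu's theorem, points of $\sh(\comm^\op, J)$ correspond to filtering, $J$-continuous functors $\comm^\op \to \sets$, and in our setting these are precisely the functors $Y \mapsto \Hom_{\comm}(Y, D)$ for $D = \varinjlim_i D_i$ a filtered colimit of objects of $\comm$; the $J$-continuity of this functor is the requirement that $D$ be $J$-local. Using that $C$ (and each $C'$ occurring in $L$) is finitely generated, so that $\Hom(C,-)$ and $\Hom(C',-)$ commute with the filtered colimit $\varinjlim_i D_i$, the filtered-colimit formula for stalks gives
\[
p^{\ast}(ay(C)) = \varinjlim_i \Hom_\comm(C, D_i) = \Hom_\comm(C, D),
\qquad
p^{\ast}(aL) = \varinjlim_i L(D_i),
\]
and unravelling (using that any $C' \to D$ lands in some $D_i$, together with $L$ being closed under post-composition) identifies $p^{\ast}(aL) \subseteq \Hom_\comm(C, D)$ with exactly the set of morphisms $f : C \to D$ that admit a factorization through some $(C \to C') \in L$.

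Combining these observations: the hypothesis of the proposition is precisely $p^{\ast}(aL) = p^{\ast}(ay(C))$ at every $J$-local ring $D$, so $aL = ay(C)$ and $L$ is $J$-covering. The main obstacle I expect is the stalk bookkeeping -- in particular keeping straight that sheafification commutes with pullback along a point, so that the stalks really are the filtered colimits above, and that the correspondence between points of $\sh(\comm^\op, J)$ and $J$-local rings is valid at this level of generality (where $J$ is an arbitrary topology with enough points, and the $J$-local rings are not assumed finitely generated).
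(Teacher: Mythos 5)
Your proposal is correct and follows essentially the same route as the paper: the ``only if'' part is the definition of $J$-local, and the ``if'' part checks that the (sheafified) inclusion of $L$ into the representable $\comm(C,-)$ is an epimorphism by computing stalks at $J$-local rings $D=\varinjlim_j D_j$, using that $C$ is finitely generated so the stalk is $\comm(C,D)$. The only cosmetic difference is that you invoke the standard lemma ``a sieve is $J$-covering iff its inclusion becomes an isomorphism after sheafification,'' whereas the paper rederives this by noting that the identity of $C$ is then locally in the image, yielding a covering sieve contained in $L$.
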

\begin{proof}
The ``only if'' part follows by definition of $J$-local ring, so we only need to prove the ``if'' part.

For $L = \{ C \to C_i \}_{i \in I}$, consider the morphism
\[
\bigsqcup_{i \in I} \comm(C_i, -) \stackrel{\xi}{\longrightarrow} \comm(C,-).
\]
By definition, the stalk of $\comm(C,-)$ at a $J$-local ring $D$ is given by $\varinjlim_j \comm(C,D_j)$ where $D = \varinjlim_j D_j$ for $(D_j)_j$ is a filtered system of finitely generated commutative rings. But because $C$ is itself finitely generated, we have an isomorphism $\varinjlim_j \comm(C,D_j) \simeq \comm(C,D)$. So the assumption implies that $\xi$ is an epimorphism (this can be checked on stalks because $J$ has enough points). In particular, the identity morphism $C \to C$ is locally in the image of $\xi$, i.e.\ we can take a $J$-covering sieve $M$ contained in the image of $\xi$. It is clear that $L$ contains $M$, so $L$ is a $J$-covering sieve too.
\end{proof}

Note that in the above proposition we explicitly use that the objects of the site are \emph{finitely generated} commutative rings.

\begin{theorem} \label{thm:points}
Let $J_K$ be a trivializing combined Grothendieck topology, so $K=K_S$ for some patch $S \subseteq \SS$.
Then
\[
\mathrm{P}(J,K) = \{ \M_s(D) \mid s \in S \text{ and }D\text{ a }J\text{-local commutative algebra} \}
\]
is a family of points for $\sh(\azu^\op,J_K)$. If $J$ has enough points, then this is a separating family of points for $J_K$, so in this case $J_K$ has enough points too.
\end{theorem}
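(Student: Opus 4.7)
My plan proceeds in two parts: first, I verify that each $\M_s(D)$ is a point of $\sh(\azu^\op, J_K)$; then, assuming $J$ has enough points, I show the family is separating by translating the statement through the projection $\pi_{K_S}$ and applying Proposition \ref{prop:topologies-on-comm} at the level of $\comm^\op$.

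For the first part, I must show $\M_s(D)$ is $J_K$-local. Given $\phi : A \to \M_s(D)$ and a $J_K$-covering sieve $\Sigma$ on $A$, I use the trivializing hypothesis to replace $\Sigma$ with a covering subsieve generated by maps $\{A \to \M_{n_i}(C_i)\}_{i \in I}$. Set $C = \z(A)$; since $\pi_{K_S}(\Sigma)$ is a $J$-cover of $C$ and $D$ is $J$-local, Proposition \ref{prop:topologies-on-comm} factors the center map as $C \to E \to D$ with $C \to E \in \pi_{K_S}(\Sigma)$. Thus $A \otimes_C E$ has constant degree $n$, and the numbers $m$ represented on $E$ form a $K_S$-covering sieve on $n$. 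Because $\phi$ factors through a matrix subalgebra $\M_{n_j}(D) \subseteq \M_s(D)$ with $n_j \mid s$, comparing degrees forces $n \mid s$, so $s \in (n)\cap S$; by the definition of $K_S$, this sieve contains $s$, yielding an $m$ with $n \mid m \mid s$ and some $\psi : A \to B$ in $\Sigma$ with $B$ Azumaya of degree $m$ over $E$. It then remains to extend $E \to D$ to a center-preserving $B \to \M_s(D)$ whose composite with $\psi$ equals $\phi$. Base changing, $B \otimes_E D$ is Azumaya of degree $m$ over $D$, and trivialization together with $J$-locality of $D$ (via $\mathrm{Br}(D)=0$ in the \'etale case, or Bass's embedding $B \otimes_E D \hookrightarrow \M_{m^k}(D)$ combined with $m^k \mid s$ from $s$ being completely infinite in the Zariski case) embeds $B \otimes_E D$ centrally into $\M_s(D)$. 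Skolem--Noether over the local ring $D$ then adjusts any two central embeddings of $A \otimes_C D$ into $\M_s(D)$ by an inner automorphism, so after such an adjustment the composite agrees with $\phi$.

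For the second part, it suffices to show that any sieve $\Sigma$ on $A$ through which every $A \to \M_s(D)$ (for $s \in S$, $D$ $J$-local) factors is already $J_K$-covering. Equivalently, $\pi_{K_S}(\Sigma)$ must be $J$-covering on $C = \z(A)$, and by Proposition \ref{prop:topologies-on-comm} (using that $J$ has enough points) it suffices to check each $C \to D$ with $D$ $J$-local lies in $\pi_{K_S}(\Sigma)$. Restricting to a component where $A \otimes_C D$ has constant degree $n$: if $(n) \cap S = \varnothing$, the empty sieve on $n$ is vacuously $K_S$-covering, so $C \to D$ is already centrally covered. Otherwise, for each $s \in (n) \cap S$ I use trivializing and $J$-locality to embed $A \otimes_C D \hookrightarrow \M_s(D)$ centrally, obtaining $\phi_s : A \to \M_s(D)$; by hypothesis $\phi_s$ factors through some $A'_s \in \Sigma$ with $\z(A'_s) \to D$, and base changing $A'_s$ along $\z(A'_s) \to D$ produces a central extension of $A \otimes_C D$ of degree $m_s := \deg A'_s$ in $\Sigma$, so $m_s$ is represented on $D$ with $m_s \mid s$. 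As $s$ ranges over $(n) \cap S$, the resulting family $\{m_s\}$ generates a sieve on $n$ that contains $(n) \cap S$ and hence $K_S$-covers $n$, which shows $C \to D \in \pi_{K_S}(\Sigma)$. The main obstacle throughout is Part 1's Skolem--Noether adjustment: one has to genuinely match the composite $A \to B \to \M_s(D)$ with the original $\phi$ rather than merely producing some morphism between these algebras.
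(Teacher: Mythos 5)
Your Part 1 is sound and follows the paper's route: reduce via the trivializing hypothesis, pick a centrally covered $C\to E$ through which the center map factors, use $s\in(n)\cap S$ to get a represented degree $m\mid s$, and then extend to $\M_s(D)$. Your insistence on matching the composite with $\phi$ is well placed -- the paper only says ``a choice of dashed arrow'', and that choice implicitly contains exactly the Skolem--Noether-over-a-local-ring adjustment you make explicit. One small caveat: your construction of the embedding $B\otimes_E D\hookrightarrow\M_s(D)$ splits into the \'etale case ($\mathrm{Br}(D)=0$) and the Zariski case ($s$ completely infinite plus Bass), whereas the paper stays inside the abstract trivializing hypothesis by factoring through a matrix algebra $\M_{m'}(C'')$ already present in the refined sieve; your split covers the two cases the paper has in mind, but the paper's route is uniform over all trivializing combined topologies.

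The genuine gap is in Part 2, at the conclusion ``which shows $C\to D\in\pi_K(\Sigma)$''. The sieve $\pi_K(\Sigma)$ consists of morphisms $C\to C'$ with $C'$ a \emph{finitely generated} commutative algebra, and ``centrally covered''/``represented'' are defined only for such morphisms; a $J$-local $D$ is an ind-object, not an object of the site, so $C\to D$ cannot lie in $\pi_K(\Sigma)$, and Proposition \ref{prop:topologies-on-comm} does not ask for that anyway: it asks you to exhibit a member $C\to C'$ of $\pi_K(\Sigma)$ together with a factorization $C\to C'\to D$. Your data -- for each $s\in(n)\cap S$ a member $A\to A'_s$ of $\Sigma$ of degree $m_s\mid s$ whose center $E_s$ maps to $D$ -- does not yet produce such a $C'$. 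You must combine \emph{finitely many} of the $E_s$ into $C'=E_{s_1}\otimes_C\cdots\otimes_C E_{s_r}$, check that each $m_{s_j}$ is still represented on $C\to C'$ after base change (the sieve is closed under postcomposition, so this works), and, crucially, know that finitely many of the opens $(m_s)$ already cover $(n)\cap S$. That finiteness is exactly the quasi-compactness of $(n)\cap S$, i.e.\ the hypothesis that $S$ is a patch (Proposition \ref{coherence-and-patches}), which your argument never invokes; without it there need be no finite subfamily (compare $S=\SS\setminus\{1\}$ in Section \ref{topologies-on-big-cell}, where the cover of $(1)\cap S$ by all primes admits no finite subcover). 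This finite-tensor-product-of-centers step is the heart of the paper's proof of the separating statement and must be supplied. (Your opening reduction -- that it suffices to show every sieve through which all $A\to\M_s(D)$ factor is $J_K$-covering -- is the standard conservativity criterion; the paper proves it inline by checking injectivity and surjectivity on sections.)
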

\begin{proof}
We first show that each $\M_s(D)$ as above is $J_K$-local. Let $A$ be an Azumaya algebra and take a morphism $A \to \M_s(D)$. Let $L$ be a $J_K$-covering sieve on $A$. We can assume that $L$ is generated by matrix algebras. Then the sieve of centrally covered morphisms $\pi_K(L)$ is a $J$-covering sieve on the center $C$ of $A$. In particular, there is a centrally covered morphism $C \to C'$ and a factorization
\[
\begin{tikzcd}
 & C' \ar[rd,dashed] & \\
 C \ar[ru] \ar[rr] & & D
\end{tikzcd}.
\]
Moreover, we can find a central extension $A \otimes_C C' \to A'$ with $A'$ of degree $m \mid s$. Because $L$ is generated by matrix algebras, there is a commutative diagram
\[
\begin{tikzcd}
\M_{m'}(C'') \ar[r] & A' \\
A \ar[u] \ar[r] & A \otimes_C C' \ar[u]
\end{tikzcd}
\]
with $m' \mid m \mid s$. The desired factorization is now given by a choice of dashed arrow
\[
\begin{tikzcd}
 & \M_{m'}(C'') \ar[rd,dashed] & \\
 A \ar[ru] \ar[rr] & & \M_s(D)
\end{tikzcd},
\]
which on centers is given by the composition $C'' \to C' \to D$.

It remains to show that the given family of points $\mathrm{P}(J,K)$ is separating whenever $J$ has enough points. Take a morphism of sheaves $\phi: \mathcal{F} \to \mathcal{G}$ inducing isomorphisms on stalks for all points in $\mathrm{P}(J,K)$. Consider $x,y \in \mathcal{F}(A)$ with $\phi(x) = \phi(y)$, with $A$ a finitely generated Azumaya algebra. We claim that $x=y$. Let $L$ be the sieve of morphisms $f: A \to A'$ such that $x=y$ after restriction along $f$. Then it is enough to show that $L$ is a $J_K$-covering sieve. In particular, we can assume that $A$ is of constant degree $n$.

Let $\M_s(D)$ be a $J_K$-local ring in $\mathrm{P}(J,K)$, and take a morphism $g : A \to \M_s(D)$. Write $\M_s(D) = \varinjlim_j B_j$ with $(B^s_j)_j$ a filtered system of (finitely generated) Azumaya algebras. We can find a $j$ such that $g(A) \subseteq B^s_j$ and such that $g: A \to B^s_j$ is in $L$. Because $D$ is Zariski-local, we can assume that $B^s_j$ is of constant degree $n_s \mid s$ over its center $D_s$. We can do this for every $s$, and then take a finite subset $S' \subseteq S$ such that $\{ n_s \}_{s \in S'}$ generates a $K$-covering sieve. Let $D'$ be the tensor product over $C$ of the centers $D_s$ with $s \in S'$. Then the morphism $C \to D'$ is centrally covered w.r.t.\ $L$, and there is a factorization
\[
\begin{tikzcd}
 & D' \ar[rd,dashed] & \\
 C \ar[ru] \ar[rr] & & D
\end{tikzcd}.
\]
If the morphism $C \to D$ was arbitrary, this would prove that $\pi_K(L)$ is a $J$-covering sieve, by Proposition \ref{prop:topologies-on-comm}. So we still need to prove that for every $h: C \to D$ with $D$ $J$-local and for every $s \in S$, we can find a morphism $g: A \to \M_s(D)$ inducing $h$ on centers. Take a family of morphisms $$\{ A \to \M_{n_i}(C_i) \}_{i \in I}$$ generating a $J_K$-covering sieve. Then for an arbitrary $h:C \to D$ we can find a centrally covered morphism $C \to C'$ and a factorization
\[
\begin{tikzcd}
 & C' \ar[rd,dashed] & \\
 C \ar[ru] \ar[rr,"{h}"] & & D
\end{tikzcd}.
\]
From this, it is easy to construct a morphism $A \to \M_s(D)$ inducing the given morphism on centers.

In order to prove that $\phi$ is surjective, we take $y \in \mathcal{G}(A)$ and consider the sieve $L$ of morphisms $f: C \to C'$ such that $y$ is in the image of $\phi$ after restriction along $f$. In the same way as above, we can show that $L$ is a $J_K$-covering sieve. Because $\phi$ is injective, the preimages are unique so they can be glued to a preimage of $y$.
\end{proof}

Note that the $J$-local commutative algebras are known in a lot of interesting cases, including the Zariski and \'etale topology. For an overview, we refer to \cite[Table 1]{gabber-kelly}. In \cite[Lemma 3.3]{gabber-kelly}, some properties of points for the flat (fppf) topology are given, but this case is more difficult and still lacks a concrete description. Schr\"{o}er in \cite{schroeer} also studies points for the fppf topology, but he considers the category of arbitrary commutative rings instead of only finitely generated ones. 

Suppose that every finitely generated commutative algebra $C$ is quasi-compact with respect to some Grothendieck topology $J$, i.e.\ any $J$-covering sieve can be refined to a finitely generated $J$-covering sieve. Then $\sh(\comm^\op,J)$ is a coherent topos, so it has enough points by Deligne's completeness theorem. Examples are the Zariski, Nisnevich, \'etale and flat topology.

As in Theorem \ref{thm:points}, let $J_K$ be a trivializing combined Grothendieck topology, $K = K_S$ for some patch $S \subseteq \SS$. If $J_K$ is moreover coarser than the maximal flat topology, then $\alg(R,-)$ is a $J_K$-sheaf for every algebra $R$, see \cite[Proposition~3.2]{azureps}. The slice category \[\sh(\azu^\op,J_K) \slash \alg(R,-)\] has as objects $J_K$-sheaves $\mathcal{F}$ equipped with a structure morphism \[\mathcal{F} \longrightarrow \alg(R,-)\] and as morphisms the sheaf morphisms $\phi: \mathcal{F} \longrightarrow \mathcal{G}$ such that the diagram
\[
\begin{tikzcd}
\mathcal{F} \ar[rr,"{\phi}"] \ar[rd] & & \mathcal{G} \ar[ld] \\
& \alg(R,-) &
\end{tikzcd}
\]
commutes. This is again a topos, see \cite[IV.7, Theorem 1]{mm-sheaves}, and we would like to determine topos-theoretic points of this slice topos.

In general, let $\mathcal{T}$ be a Grothendieck topos and let $\mathcal{F}$ be a sheaf in $\mathcal{T}$. For a family of points $P(\mathcal{T})$, we can construct a point of $\mathcal{T}/\mathcal{F}$ from a point $p \in P(\mathcal{T})$ together with an element $x \in \mathcal{F}_p$ in the stalk. The corresponding stalk is given by
\[
\mathcal{G}_{(p,x)} = \phi_p^{-1}(x)
\]
where $\phi: \mathcal{G} \to \mathcal{F}$ is the structure morphism and $\phi_p$ is the induced map on the stalk. If we start from a separating family of points for $\mathcal{T}$, then it is easy to see that we end up with a separating family of points for $\mathcal{T}/\mathcal{F}$.

In the case of an (finitely generated, not necessarily commutative) algebra $R$, we obtain a family of points
\[
\mathrm{P}_R(J,K) = \{ R \to \M_s(D) \mid s \in S \text{ and }D\text{ a }J\text{-local commutative algebra} \},
\]
using Theorem \ref{thm:points}. Here we assume that $J_K$ is a trivializing combined Grothendieck topology, coarser than the maximal flat topology, with $S$ the patch associated to $K$. If $J$ has enough points, then the set $\mathrm{P}_R(J,K)$ is separating.

\section{Sheaves with action of the projective general linear group}
\label{pgl-action}

Let $s \in \SS$ be a supernatural number. Then the singleton $\{s\} \subseteq \SS$ is a patch, and we can look at the corresponding Grothendieck topology $K_s$. We will use the shorthand $J_s$ for the Grothendieck topology $J_{(K_s)}$ on $\azu^\op$. We will always assume that $J_s$ is a trivializing combined Grothendieck topology. By the results of Section \ref{trivializing-topologies}, this is the case for example when
\begin{enumerate}
\item $J$ is finer than the \'etale topology and $s$ is arbitrary, or
\item $J$ is finer than the Zariski topology and $s$ is completely infinite, i.e.\ it is of the form
\[
s = \prod_{p \in \Sigma} p^\infty
\]
for $\Sigma$ some set of primes.
\end{enumerate}
These are the two cases we keep in mind for this section. Moreover, we will often assume that every $J$-covering sieve contains a finitely generated $J$-covering sieve. 

Now consider $A$ a finitely generated Azumaya algebra of constant degree $n$ over its center $C$.
\begin{enumerate}
\item If $n \nmid s$, then every sieve on $A$ is a $J_s$-covering sieve, including the empty sieve.
\item If $n \mid s$, then a sieve $L$ on $A$ is a covering sieve if and only if there is some $J$-covering $\{f_i: C \to C_i \}_{i \in I}$ such that for each $i \in I$, the sieve $L$ contains a composition
\[
\begin{tikzcd}
 & B_i \\
A \ar[r,"{1 \otimes f_i}"] & A \otimes_C C_i \ar[u,"{\phi_i}"]
\end{tikzcd}
\]
with $\phi_i$ some central extension, $B_i$ of degree $m \mid s$ over its center $C_i$.
\end{enumerate}
One case was already introduced in \cite{azureps}: the maximal topology $J_\max$ associated to a Grothendieck topology $J$ on $\comm^\op$. This is precisely $J_s$ for
\[
s = \prod_{p \in \mathbb{P}} p^\infty
\]
the maximal supernatural number. By \cite[Proposition 3.2]{azureps}, the functor
\begin{gather*}
\alg(R,-) : \azu \longrightarrow \sets \\
A \mapsto \alg(R,A)
\end{gather*}
is a sheaf for the maximal flat topology on $\azu^\op$, i.e.\ for $J_\max$ with $J$ the flat topology (or any coarser topology). Similarly, the representable presheaves $\azu(A,-)$ are sheaves for the maximal flat topology (because morphisms are center-preserving if and only if they are center-preserving $J_\max$-locally). In particular, $J_\max$ is a subcanonical Grothendieck topology for $J$ the flat topology.

We will now show that we can easily construct $J_s$-sheaves from $J_\max$-sheaves, with $s$ any supernatural number.

Let $A$ be a finitely generated Azumaya algebra with center $C$. Then the degree of $A$ is locally constant over $\spec(C)$, so there is a unique decomposition
\[
C = C_1 \times \dots \times C_k
\]
into components, such that $A\otimes_C C_i$ is of constant rank $d_i$ for $i = 1,\dots,k$ and such that $d_i \neq d_j$ for $i \neq j$. If $s$ is a supernatural number, we then define the\emph{ $s$-truncation of $A$} to be the Azumaya algebra
\[
A_s = \prod_{d_i \mid s} A \otimes_C C_i
\]
(the empty product gives the zero ring). Similarly, we define the \emph{$s$-truncation of a presheaf $\mathcal{F}$} to be
\[
\mathcal{F}_{\downarrow s}(A) = \mathcal{F}(A_s).
\]
One can check that this is again a presheaf.
\begin{proposition}
Let $\mathcal{F}$ be a $J_\max$-sheaf, where $J_\max$ is the topology associated to $J$ and the maximal supernatural number
\[
\prod_{p \in \mathbb{P}} p^\infty.
\]
Let $s$ be a supernatural number. Then $\mathcal{F}_{\downarrow s}$ is a $J_s$-sheaf, and it is naturally isomorphic to the $J_s$-sheafification of $\mathcal{F}$.
\end{proposition}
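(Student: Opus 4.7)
The plan has three pieces: constructing a natural map $\eta \colon \mathcal{F} \to \mathcal{F}_{\downarrow s}$, verifying that $\mathcal{F}_{\downarrow s}$ is a $J_s$-sheaf, and showing that $\eta$ is a $J_s$-local isomorphism; the last two together identify $\mathcal{F}_{\downarrow s}$ with the $J_s$-sheafification of $\mathcal{F}$. For the first, I would begin by checking that $A \mapsto A_s$ is functorial on $\azu$. If $f \colon A \to B$ is center-preserving and one has constant-degree decompositions $A = \prod_i A_i$ and $B = \prod_j B_j$ of degrees $d_i$ and $e_j$, then for any $i,j$ with $f(e_i) \cdot e_j' \neq 0$ (where $e_i, e_j'$ are the central idempotents cutting out $A_i$ and $B_j$) the induced unital morphism from $A_i$ into the Azumaya algebra $f(e_i) e_j' B_j$ forces $d_i \mid e_j$. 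Hence $d_i \nmid s$ implies $e_j \nmid s$ at those $j$, so $f$ sends $A' = \prod_{d_i \nmid s} A_i$ into $B' = \prod_{e_j \nmid s} B_j$ and descends to a unital map $f_s \colon A_s \to B_s$. With $\pi_A \colon A \twoheadrightarrow A_s$ the projection, naturality of $\eta_A := \mathcal{F}(\pi_A)$ follows from $\pi_B \circ f = f_s \circ \pi_A$.

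For the sheaf condition, the key observation is that the constant-degree decomposition $\{A \to A_i\}_i$ is simultaneously a $J_\max$- and a $J_s$-covering sieve of $A$, so the $J_\max$-sheaf hypothesis gives $\mathcal{F}(A) \cong \prod_i \mathcal{F}(A_i)$ and consequently $\mathcal{F}_{\downarrow s}(A) \cong \prod_{d_i \mid s} \mathcal{F}(A_i)$. This reduces the sheaf condition to the two constant-degree cases. If $A$ has constant degree $n \mid s$, any $J_s$-covering sieve $L$ contains the sub-sieve $L'$ generated by those morphisms $A \to B$ in $L$ with $B$ of constant degree dividing $s$ (these appear directly in the definition of \emph{centrally covered} with respect to $K_s$); this $L'$ is again $J_s$- and $J_\max$-covering, and a matching family for $\mathcal{F}_{\downarrow s}$ on $L'$ is literally a matching family for $\mathcal{F}$ on $L'$ because $B = B_s$ on the targets. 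One glues using the $J_\max$-sheaf property and extends to all of $L$ by projecting each target onto its constant-degree pieces of degree dividing $s$. If $A$ has constant degree $n \nmid s$, then $A_s = 0$, any $f \colon A \to B$ in $\azu$ forces $B_s = 0$, and the sheaf condition collapses to $\mathcal{F}(0) = *$, which holds because the empty sieve is a $J_\max$-cover of the zero ring.

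For the sheafification claim, Theorem \ref{thm:points} provides a separating family of $J_s$-points of the form $\M_s(D) = \varinjlim_i \M_{n_i}(D)$ with $n_i \mid s$ and $D$ a $J$-local commutative algebra, so it suffices to show that $\eta$ is bijective on each such stalk. Any arrow $A \to \M_s(D)$ in $\wis{ind}\text{-}\azu$ factors through some $\M_{n_i}(D)$ of degree $n_i \mid s$, and a unital algebra morphism from $A$ into such a matrix algebra must kill the components of $A$ whose degree does not divide $s$; hence the arrow factors uniquely through $\pi_A \colon A \to A_s$. So the arrows with $A = A_s$ are cofinal in the filtered diagram computing the stalk, and the stalks of $\mathcal{F}$ and $\mathcal{F}_{\downarrow s}$ both equal $\varinjlim_{A = A_s,\,f \colon A \to \M_s(D)} \mathcal{F}(A)$, with $\eta$ realising the comparison. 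The main obstacle is the constant-degree $n \mid s$ case of the sheaf step: one has to unpack the definition of $J_s$ to produce the sub-sieve $L'$ and then check that the unique glue on $L'$ in $\mathcal{F}$ is automatically compatible with the full matching family on $L$, which comes down to a component-wise comparison under the product decomposition of each target.
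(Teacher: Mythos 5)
Your construction of the truncation functor, the naturality of $\eta_A=\mathcal{F}(\pi_A)$, and the verification that $\mathcal{F}_{\downarrow s}$ is a $J_s$-sheaf all match the paper's argument: reduce to constant degree by the Zariski decomposition, refine a $J_s$-cover of a degree-$n\mid s$ algebra to one generated by targets of degree dividing $s$ (which is then a $J_\max$-cover), and observe that everything is a singleton when $n\nmid s$. The wrinkle you flag about passing between matching families on $L'$ and on $L$ is real but routine, and the paper elides it at the same level of detail.

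The genuine problem is in the last step. To identify $\mathcal{F}_{\downarrow s}$ with the $J_s$-sheafification you check that $\eta$ is an isomorphism on the stalks at the points $\M_s(D)$ of Theorem \ref{thm:points} and then conclude that $\eta$ becomes an isomorphism after sheafification. That conclusion needs the family $\mathrm{P}(J,K_s)$ to be \emph{separating}, which Theorem \ref{thm:points} only guarantees when $J$ has enough points; the proposition makes no such assumption (the finite-refinement hypothesis on $J$, which would give enough points via Deligne, is only ``often'' assumed in this section and is not part of this statement). So as written your argument proves the sheafification claim only under an extra hypothesis; without enough points the stalkwise computation, although correct in itself (indeed the stalk of either presheaf at $\M_s(D)=\varinjlim_j \M_{n_j}(D)$ is just $\varinjlim_j \mathcal{F}(\M_{n_j}(D))$, since each $\M_{n_j}(D)$ equals its own $s$-truncation), does not suffice to control $\eta$ after sheafification. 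The paper avoids points entirely: for any $J_s$-sheaf $\mathcal{G}$, the single morphism $\pi\colon A \to A_s$ generates a $J_s$-covering sieve, so $\mathcal{G}(A)\to\mathcal{G}(A_s)$ is bijective, i.e.\ $\mathcal{G}\to\mathcal{G}_{\downarrow s}$ is an isomorphism; combined with the naturality of $\eta$ this gives the universal property of $\mathcal{F}\to\mathcal{F}_{\downarrow s}$ among maps to $J_s$-sheaves directly. Replacing your stalk argument by this observation (or adding the hypothesis that $J$ has enough points) closes the gap.
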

\begin{proof}
Let $A$ be an Azumaya algebra of constant rank $n$. If $n \nmid s$, then $$\mathcal{F}_{\downarrow s}(B) = \{ \ast \}$$
whenever there is a morphism $A \to B$. So, for trivial reasons, $\mathcal{F}_{\downarrow s}$ satisfies the gluing criterion with respect to $J_s$-covering sieves on $A$. If $n \mid s$, then any $J_s$-covering sieve on $A$ can be refined to a covering sieve generated by Azumaya algebras with degrees dividing $s$. Such a sieve is a $J_\max$-covering sieve as well, so here the gluing criterion is satisfied because $\mathcal{F}$ is a $J_\max$-sheaf. If $A$ is not of constant rank, then we can take a Zariski cover on which it is. So this case reduces to the previous two cases.

This shows that $\mathcal{F}_{\downarrow s}$ is a $J_s$-sheaf. Conversely, let $\mathcal{G}$ be a $J_s$-sheaf. The morphism $\pi: A \to A_s$ generates a $J_s$-covering sieve, so the induced map $\mathcal{G} \to \mathcal{G}_{\downarrow s}$ is an isomorphism. Now it is easy to show that $\mathcal{F} \mapsto \mathcal{F}_{\downarrow s}$ is the sheafification functor.
\end{proof}

The next goal of this section is to describe the topos
\[
\sh(\azu^\op,J_s)
\]
in terms of sheaves on $\comm^\op$ equipped with an action of an (infinite) projective linear group $\pgl_s$, that we will define below. The inspiration for this equivalence comes from \cite{caramello-topological}, where atomic toposes are described in terms of a topological group, under some mild assumptions. Because $\sh(\azu^\op,J_s)$ is not an atomic topos, the results from \cite{caramello-topological} do not easily transfer to our setting. However, the general principle still works, and this will result in Theorem \ref{thm:pgl}.

For each $s \in \SS$, we would like to define $\pgl_s$ as
\[
\pgl_s(C) = \bigcup_{n \mid s}\aut_C(\M_n(C)),
\]
but we have to take a uniform choice as to how the automorphism groups are embedded into each other. So we define a functor
\[
\M : \comm \times \mathtt{D}^\op \longrightarrow \azu
\]
as follows. For each $n \in \NN_+$, consider the matrix algebra
\[
\M(C,n) = \M_{p_1}(C) \otimes_C \dots \otimes_C \M_{p_k}(C)
\]
with $p_1 \leq \dots \leq p_k$ the prime factors of $n$, each occurring with the right multiplicity. Moreover, for each $n \mid m$ and morphism $C \to D$, we consider the morphism
\[
\M(C,n) = \M_{p_1}(C) \otimes_C \dots \otimes_C \M_{p_k}(C) \stackrel{\rho_{n,m}}{\longrightarrow} \M_{q_1}(D) \otimes_D \dots \otimes_D \M_{q_l}(D) = \M(D,m)
\]
which is given by sending the $k$-th occurrence of a tensor factor $\M_p(C)$ on the left to the $k$-th occurrence of $\M_p(D)$ on the right side. It is easy to see that this turns $\M$ into a functor. In the following, we make the identification $\M_n(C) = \M(C,n)$, and the inclusion $\rho_{n,m}$ will be called a \emph{standard embedding}.

Note that $\rho_{n,m}$ as above induces a morphism
\[
\aut_C(\M_n(C)) \longrightarrow \aut_C(\M_m(D)).
\]
We then define the projective linear group of degree $s$ as the union
\begin{equation}
\pgl_s(C) = \bigcup_{n \mid s} \aut_C(\M_n(C)).
\end{equation}
Note that there is an obvious action of $\pgl_s(C)$ on the infinite matrix algebra
\begin{equation} \label{M_s}
\M_s(C) = \bigcup_{n \mid s} \M_n(C).
\end{equation}
Using this action, we can define equivalence relations $\sim_n$ on $\pgl_s(C)$ by setting
\[
g \sim_n h \quad\Leftrightarrow\quad g\cdot x = h \cdot x \quad \text{for all }x \in \M_n(C).
\]

\begin{definition} \label{def:continuous}
Let $\mathcal{G}$ be a presheaf on $\comm^\op$, equipped with an action of $\pgl_s$. Then we say that the action is \emph{continuous} if for every finitely generated commutative algebra $C$ and $x \in \mathcal{G}(C)$, there is some $n \in \NN_+$ such that
\[
g \cdot x|_D = x|_D
\]
for every morphism $C \to D$ and $g \in \pgl(D)$ with $g \sim_n 1$.
\end{definition}

It is well-known that $\pgl_n$ for $n \in \NN_+$ is representable by an affine scheme, so it is a sheaf for the canonical Grothendieck topology. Similarly, let $J$ be a subcanonical topology, such that every $J$-cover has a finite subcover. Then we claim that $\pgl_s$ is a $J$-sheaf. It is enough to check the sheaf condition on a finite cover $\{C \to C_i \}_{i = 1}^k$. Take elements $g_i \in \pgl_s(C_i)$ agreeing on intersections. We can assume that each $g_i \in \pgl_n(C_i)$ for some common $n \in \NN_+$. Now there is a unique glued element $g \in \pgl_n(C) \subseteq \pgl_s(C)$. We can prove in an analogous way that $\M_s$ from (\ref{M_s}) is a $J$-sheaf. Moreover, it is easy to see that the action of $\pgl_s$ is continuous (as in Definition \ref{def:continuous}).

More generally, we can take any finitely generated noncommutative algebra $R$ over $\CC$, and look at the functor
\[
\comm \to \sets \qquad D \mapsto \alg(R,\M_n(D)),
\]
sending a commutative ring $D$ to the set of algebra morphisms $R \to \M_n(D)$. It is well-known that this functor is representable by an affine scheme $\rep_n R$. The standard embeddings $\M_n(D) \to \M_m(D)$ induce a morphism of schemes $\rep_n R \to \rep_m R$. So we can again define a functor
\begin{equation} \label{rep_s}
(\rep_s R)(D) = \bigcup_{n \mid s} (\rep_n R)(D)
\end{equation}
and with the same proof as for $\pgl_s$ we can see that this is a $J$-sheaf (under the same conditions: $J$ subcanonical and every $J$-cover has a finite subcover).

We would like to define a sheaf of \emph{trace preserving} representations $\trep_s R$ as well, for $R$ an algebra with trace. For $R$ an algebra, recall that a \emph{trace} on $R$ is a $\CC$-linear function $\tr : R \to R$ such that for all $a,b \in R$
\begin{enumerate}
\item (Maps into center) $\tr(a)b = b\,\tr(a)$;
\item (Necklace property) $\tr(ab) = \tr(ba)$;
\item (Linear with respect to traces) $\tr(\tr(a)b) = \tr(a)\tr(b)$.
\end{enumerate}
Each Azumaya algebra $A$ can be equipped with a trace, by viewing it as a subalgebra of some matrix algebra If $A$ is of constant degree $n$, then $\tr(1) = n$. Now let $\phi : A \to B$ be a center-preserving morphism, with $A$ of constant degree $n$ and $B$ of constant degree $nk$. Then it is easy to see that
\[
\phi(\tr(a)) = \frac{1}{k}\tr(\phi(a)),
\]
so $\phi$ is trace-preserving if and only if $A$ and $B$ are of the same degree. To avoid this, we define the \emph{normalized trace} of an Azumaya algebra $A$ as
\[
\tr'(a) = \frac{1}{n} \tr(a)
\]
for all $a \in A$, with $n$ the degree of $A$. In particular $\tr'(1) = 1$. If $A$ does not have constant degree, then the degree is at least locally constant, so we can define the normalized trace locally. For the normalized trace, morphisms between Azumaya algebras are trace-preserving if and only if they are center-preserving. Moreover, there is an obvious normalized trace on $\M_s(C)$ (well-known in the context of $C^*$-algebras).

If we work with normalized traces, it makes sense to look at the subsheaf
\[
\trep_s(R) \subseteq \rep_s(R)
\]
consisting of the morphisms $R \to \M_s(C)$ that are trace-preserving. In particular, for $A$ an Azumaya algebra, the sheaf $\trep_s(A)$ is given by the center-preserving morphisms $A \to \M_s(C)$ (these are all embeddings).

\begin{proposition} \label{prop:homogeneous}
Take a supernatural number $s \in \SS$. Let $A$ be a finitely generated Azumaya algebra of degree $n \mid s$ over its center $C$. Then $\trep_s A$ is an \emph{\'etale $\pgl_s$-homogeneous space} over $\spec(C)$, in the sense that
\begin{enumerate}
\item \emph{($\exists$ section locally)} we can find an \'etale covering $\{f_i: C \to C_i \}_{i \in I}$ and morphisms $\phi_i : A \to \M_s(C_i)$ extending $f_i$, for all $i \in I$;
\item \emph{(sections in same orbit locally)} for any $f: C \to E$ and $\phi, \psi : A \to \M_s(E)$ extending $f$, we can find an \'etale covering $\{f_i : E\to E_i \}_{i \in I}$ and automorphisms $g_i \in \pgl_s(E_i)$, such that $g_i \cdot \phi|_{E_i} = \psi|_{E_i}$.
\[
\begin{tikzcd}[row sep=tiny,column sep=huge]
& \M_s(E_i) \ar[dd,"{g_i}"] \\
A \ar[ru,"{\phi|_{E_i}}"] \ar[rd,"{\psi|_{E_i}}"'] & \\
& \M_s(E_i)
\end{tikzcd}
\]
\end{enumerate}
In the case that $s$ is completely infinite, $\trep_s(A)$ is even a \emph{Zariski $\pgl_s$-homogeneous space}, i.e.\ the coverings can be Zariski coverings in the two conditions above.
\end{proposition}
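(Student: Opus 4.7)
The plan is to treat the two conditions separately.

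For condition (1), I invoke the classical fact that an Azumaya algebra of degree $n$ becomes trivial after an étale base change: choose an étale cover $\{f_i : C \to C_i\}_{i \in I}$ together with $C_i$-algebra isomorphisms $\alpha_i : A \otimes_C C_i \xrightarrow{\sim} \M_n(C_i)$, and produce $\phi_i$ as the composition $A \to A \otimes_C C_i \xrightarrow{\alpha_i} \M_n(C_i) \hookrightarrow \M_s(C_i)$ with the standard embedding, which is available because $n \mid s$. When $s$ is completely infinite, the construction becomes even simpler: Bass's embedding theorem recalled in Section~\ref{trivializing-topologies} gives a central embedding $A \hookrightarrow \M_{n^k}(C)$ for some $k \in \NN_+$, and since $s = \prod_{p \in \Sigma} p^\infty$ we have $n^k \mid s$, so the composition $A \hookrightarrow \M_{n^k}(C) \hookrightarrow \M_s(C)$ is even a global section, with no covering needed.

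For condition (2), the first step is to observe that since $A$ is finitely generated, both images $\phi(A)$ and $\psi(A)$ lie in some common $\M_m(E) \subseteq \M_s(E)$ with $n \mid m$ and $m \mid s$, so the task reduces to producing, étale-locally on $E$, an element of $\pgl_m(E') \subseteq \pgl_s(E')$ that conjugates $\phi$ to $\psi$. I then argue via Skolem--Noether: refine the cover of $E$ so that $A \otimes_C E$ trivializes as $\M_n$; both $\phi$ and $\psi$ become $E'$-algebra embeddings $\M_n \hookrightarrow \M_m$ agreeing on centers, and the classical Skolem--Noether theorem, applied on the strictly henselian stalks (where every Azumaya algebra is a matrix algebra), supplies the required inner automorphism.

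For the Zariski strengthening when $s$ is completely infinite, I exploit that $m^k \mid s$ for every $k \in \NN_+$, together with the extra room provided by the standard embeddings $\M_m \hookrightarrow \M_{m^k}$. After enlarging $m$ to $m^k$ for suitable $k$, one applies Bass's embedding to both $\phi$ and $\psi$ in order to bring them into a common ``standard position'' Zariski-locally on $E$, after which comparing the two Zariski-local normal forms yields the required element of $\pgl_s(E')$. The main obstacle will be precisely this Zariski-local step of condition (2) in the completely infinite case: the classical Skolem--Noether theorem for matrix algebras over a base ring is only étale-local, since conjugating two embeddings requires inverting an element that need not be a Zariski-local unit, and the argument must leverage the extra flexibility of the infinite union $\M_s$ so that inner automorphisms of a fixed $\M_m$ can be replaced by inner automorphisms of a sufficiently large $\M_{m^k}$ that can be constructed Zariski-locally.
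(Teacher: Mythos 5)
Your treatment of condition (1) is essentially the paper's: \'etale trivialization of $A$ composed with the standard embedding $\M_n\hookrightarrow\M_s$ (possible since $n\mid s$), and, for $s$ completely infinite, Bass's embedding $A\hookrightarrow\M_{n^k}(C)\subseteq\M_s(C)$ giving even a global section. Your \'etale-local argument for condition (2) can also be completed: once $A\otimes_C E$ is split on an \'etale cover, two unital embeddings $\M_n(E')\hookrightarrow\M_m(E')$ are conjugate even Zariski-locally (the two induced $\M_n(E')$-module structures on $(E')^m$ correspond under Morita equivalence to projective $E'$-modules of rank $m/n$, which are locally free), and the conjugating element found over a strictly henselian stalk descends to an actual \'etale neighbourhood by finite presentation. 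So for general $s$ your route gives the \'etale statement, by a different (Skolem--Noether) mechanism than the paper's.

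The genuine gap is exactly where you flag it: the Zariski form of condition (2) for completely infinite $s$ is part of the statement, and your sketch does not prove it. Bringing $\phi$ and $\psi$ into a common ``standard position'' Zariski-locally is itself an instance of the conjugacy statement being proved (with one of the two maps replaced by a Bass/standard embedding), so as written the step is circular, and no actual ``normal form'' is defined. Your diagnosis of the obstruction is also off: Skolem--Noether for embeddings between \emph{matrix} algebras over a ring is already Zariski-local, as noted above; the genuinely \'etale-only step in your argument is the splitting of $A\otimes_C E$, which cannot in general be achieved Zariski-locally, and passing from $\M_m$ to $\M_{m^k}$ inside $\M_s$ does not remove this. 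The paper's proof of (2) avoids splitting $A$ altogether: choosing $m\mid s$ with $\phi(A),\psi(A)\subseteq\M_m(E)$, the Double Centralizer Theorem gives $\phi(A)\otimes_E B\cong\M_m(E)\cong\psi(A)\otimes_E B'$; the centralizers $B,B'$ are Brauer-equivalent Azumaya algebras of the same degree, hence isomorphic after a Zariski covering $\{E\to E_i\}_{i\in I}$, and tensoring the canonical isomorphism $\phi(A)\cong A\otimes_C E\cong\psi(A)$ with an isomorphism $B\otimes_E E_i\cong B'\otimes_E E_i$ yields $g_i\in\pgl_m(E_i)\subseteq\pgl_s(E_i)$ with $g_i\cdot\phi|_{E_i}=\psi|_{E_i}$. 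This produces a Zariski covering in condition (2) for \emph{every} $s$ (so only condition (1) distinguishes \'etale from Zariski), which is precisely what your approach is missing; to repair your proof, replace the ``standard position'' step by this centralizer/Brauer-class argument.
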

\begin{proof}
\begin{enumerate}
\item Just take an \'etale covering $\{ C \to C_i \}_{i \in I}$ trivializing $A$, i.e.\ such that $A \otimes_C C_i \cong \M_n(C_i)$. In the case that $s$ is completely infinite, we can already find an embedding $A \to \M_s(C)$, using \cite[Proposition 6.1]{bass-lectures} and the remark afterwards. So the trivial covering suffices.
\item Take a natural number $m \mid s$ such that $\phi(A)$ and $\psi(A)$ are both contained in $\M_m(E)$. Use the Double Centralizer Theorem to write
\[
\phi(A) \otimes_E B \cong \M_m(E) \cong \psi(A) \otimes_E B'.
\]
Because $\phi(A)$ and $\psi(A)$ are isomorphic, $B$ and $B'$ are in the same Brauer class. Take a Zariski covering $\{E \to E_i\}_{i \in I}$ such that $B \otimes_E E_i \cong B' \otimes_E E_i$. Now we get $g_i \in \pgl_m(E_i)$ by tensoring an isomorphism $\phi(A) \to \psi(A)$ with an isomorphism $B \otimes_E E_i \to B' \otimes_E E_i$.
\end{enumerate}
\end{proof}

If $A$ is an Azumaya algebra of degree $n \nmid s$, then $\trep_s(A)$ is the empty sheaf.

\begin{theorem} \label{thm:pgl}
Take a supernatural number $s \in \SS$. Take a subcanonical Grothendieck topology $J$ such that every $J$-cover admits a finite subcover, and such that $J_s = J_{(K_s)}$ is a trivializing combined Grothendieck topology. Then there is an equivalence of toposes
\[
\sh(\azu^\op,J_s) \simeq \pgl_s - \sh(\comm^\op,J).
\]
Here the right hand side is the topos of $J$-sheaves equipped with a continuous action of $\pgl_s$ (as in Definition \ref{def:continuous}).
\end{theorem}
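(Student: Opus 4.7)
The plan is to construct mutually quasi-inverse functors
\begin{equation*}
F: \sh(\azu^\op, J_s) \longrightarrow \pgl_s - \sh(\comm^\op, J), \quad G: \pgl_s - \sh(\comm^\op, J) \longrightarrow \sh(\azu^\op, J_s),
\end{equation*}
where $F$ evaluates a $J_s$-sheaf on the tower $\M_s$ and $G$ twists a $\pgl_s$-sheaf by the torsor $\trep_s$. The main inputs are Proposition \ref{prop:homogeneous} (which makes $\trep_s A$ into a $\pgl_s$-torsor $J$-locally over $\z(A)$) and the trivializing hypothesis on $J_s$ (which reduces everything about Azumaya algebras to matrix algebras $\M(C,n)$ with $n \mid s$). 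Concretely, for $\mathcal{F}$ a $J_s$-sheaf I set
\begin{equation*}
F(\mathcal{F})(C) = \varinjlim_{n \mid s} \mathcal{F}(\M(C,n)),
\end{equation*}
with transition maps from the standard embeddings of Section \ref{pgl-action} and $\pgl_s(C)$-action applied stage-by-stage via $\aut_C(\M(C,n))$. Checking that $F(\mathcal{F})$ is a $J$-sheaf comes down to commuting a filtered colimit in $n$ with the finite-limit equalizer diagram for a $J$-cover $\{C \to C_i\}$, using that every $J$-cover has a finite subcover and that $\{\M(C,n) \to \M(C_i,n)\}$ is itself a $J_s$-cover. Continuity of the $\pgl_s$-action is immediate, since $g \sim_n 1$ means $g$ acts as the identity on $\M(C,n)$.

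For $\mathcal{G}$ a continuous $\pgl_s$-sheaf and $A$ an Azumaya algebra of constant degree $n$ over its center $C$ (the general case reduces to this via the decomposition into constant-degree components), I set $G(\mathcal{G})(A) = \{\ast\}$ when $n \nmid s$, and otherwise
\begin{equation*}
G(\mathcal{G})(A) = \Hom_{\pgl_s}\bigl(\trep_s(A),\, \mathcal{G} \times \comm(C,-)\bigr),
\end{equation*}
the set of $\pgl_s$-equivariant sheaf morphisms over $\spec(C)$. Functoriality in $A$ follows from contravariance of $\trep_s$, and the $J_s$-sheaf property is verified by refining any $J_s$-covering sieve on $A$ (via the trivializing hypothesis) to one generated by morphisms $A \to \M(C_i,m_i)$ with $\{C \to C_i\}$ a $J$-cover, so that gluing reduces to the $J$-sheaf property of $\mathcal{G}$ combined with the local triviality of $\trep_s A$ from Proposition \ref{prop:homogeneous}.

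The verification that $F$ and $G$ are quasi-inverse proceeds through the matrix-algebra case. For $A = \M(C,n)$ with $n \mid s$, the torsor $\trep_s(\M(C,n))$ has the standard embedding as a global section, and the Double Centralizer Theorem identifies its stabilizer as $\pgl_{s/n}$, so $\trep_s(\M(C,n)) \cong \comm(C,-) \times \pgl_s/\pgl_{s/n}$. Consequently
\begin{equation*}
G(\mathcal{G})(\M(C,n)) \cong \mathcal{G}^{[\pgl_{s/n}]}(C),
\end{equation*}
the subsheaf of sections $x$ such that $g \cdot x|_D = x|_D$ for every $C \to D$ and every $g \in \pgl_{s/n}(D)$. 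Continuity of the $\pgl_s$-action then gives $\mathcal{G}(C) = \varinjlim_{n \mid s} \mathcal{G}^{[\pgl_{s/n}]}(C)$, establishing $F \circ G \simeq \mathrm{id}$. The reverse direction $G \circ F \simeq \mathrm{id}$ is verified first on matrix algebras -- identifying $\mathcal{F}(\M(C,n))$ with $F(\mathcal{F})^{[\pgl_{s/n}]}(C)$ -- and then descended to arbitrary $A$ by means of the $\pgl_s$-torsor $\trep_s A$ and the $J_s$-sheaf condition of $\mathcal{F}$ along the family of maps from $A$ to matrix algebras that trivializes this torsor.

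The decisive difficulty will be this last identification $\mathcal{F}(\M(C,n)) \cong F(\mathcal{F})^{[\pgl_{s/n}]}(C)$. The $J_s$-sheaf condition of $\mathcal{F}$ applied to the ``vertical'' covers $\{\M(C,n) \to \M(C,m)\}_{n \mid m \mid s}$ produces only the naive $\pgl_{m/n}(C)$-fixed points, whereas what is needed on the right is the sheaf-theoretic fixed-point condition stable under all base changes $C \to D$; bridging the gap requires combining those vertical covers with the ``horizontal'' $J_s$-covers arising from pullbacks along $J$-covers $C \to C'$, and it is precisely here that the continuity hypothesis of Definition \ref{def:continuous}, rather than mere $\pgl_s$-equivariance, becomes essential. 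This is the analogue, in our non-atomic setting, of the continuity condition on topological-group actions in Caramello's classification of atomic toposes.
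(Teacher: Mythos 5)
Your architecture is the paper's: the paper uses exactly the functors $\mathcal{F}^\flat(C)=\varinjlim_{n\mid s}\mathcal{F}(\M_n(C))$ and $\mathcal{G}^\sharp(A)=\Hom_{\pgl_s}(\trep_s A,\mathcal{G})$ (your relative Hom over $\spec(C)$ is the same set, since $\trep_s A$ already lies over $\comm(C,-)$), reduces to matrix algebras via the trivializing hypothesis, invokes Proposition \ref{prop:homogeneous} for local transitivity, and uses continuity to recover $\mathcal{G}$ from fixed points. Your treatment of $F\circ G\simeq\mathrm{id}$ via $\trep_s\M_n(C)\cong\comm(C,-)\times(\pgl_s/\pgl_{s/n})$ (understood as a \emph{sheaf} quotient; note $\trep_s A$ is a homogeneous space with stabilizer $\pgl_{s/n}$, not a torsor, though you do use the stabilizer correctly) together with $\mathcal{G}(C)=\varinjlim_{n\mid s}\mathcal{G}^{[\pgl_{s/n}]}(C)$ is a clean repackaging of the paper's counit computation and is essentially sound.

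The genuine gap is that the two steps carrying the real weight are announced rather than carried out. First, you flag $\mathcal{F}(\M(C,n))\cong F(\mathcal{F})^{[\pgl_{s/n}]}(C)$ as ``the decisive difficulty'' and only describe what bridging it ``requires''; moreover you locate the essential use of continuity there, which is misplaced: continuity of the action on $F(\mathcal{F})$ is automatic (any class is represented at a finite stage $m$ and is fixed by every $g\sim_m 1$), and continuity is genuinely needed only in the composite you already handled, $F\circ G\simeq\mathrm{id}$. What the missing identification actually needs is descent along the single-morphism $J_s$-cover $\M_n(C)\to\M_m(C)$: to verify the matching-family condition one must compare two morphisms $w,w'\colon \M_m(C)\to B$ into an \emph{arbitrary} Azumaya algebra $B$ agreeing on $\M_n(C)$ and show $w^*y_m=w'^*y_m$, which is done, as in the paper, by Proposition \ref{prop:homogeneous} (locally $w'$ and $w$ differ by an element of $\pgl_s$ centralizing the image of $\M_n$) combined with $J_s$-separatedness of $\mathcal{F}$ and the base-change-stable invariance --- not by Definition \ref{def:continuous}. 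Second, the claim that $G(\mathcal{G})$ is a $J_s$-sheaf is dispatched in one sentence, whereas it requires the argument occupying most of the paper's proof: extracting from the matching-family condition that each $\psi_i$ is invariant under automorphisms of $A_i$ fixing $A$, then defining the glued morphism only $J$-locally via Proposition \ref{prop:homogeneous} and checking independence of the choices and agreement on overlaps. Until these verifications are written out, what you have is a correct plan along the paper's route, not yet a proof.
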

\begin{proof}
Consider the functors
\[
\begin{tikzcd}
\sh(\azu^\op,J_s) \ar[r,bend left,"{\flat}"] & \pgl_s-\sh(\comm^\op,J) \ar[l,bend left,"{\sharp}"]
\end{tikzcd}
\]
defined by
\begin{gather*}
\mathcal{F}^\flat(C) = \varinjlim_{n \mid s} \mathcal{F}(\M_n(C)) \\
\mathcal{G}^\sharp(A) = \Hom_{\pgl_s}( \trep_s A, \mathcal{G}  ).
\end{gather*}
Here the action of $g \in \pgl_n(C)$ on $\mathcal{F}(\M_n(C))$ is induced by the action on $\M_n(C)$, for all $n \mid s$. It is clear that the action is continuous (as in Definition \ref{def:continuous}).

\underline{$\mathcal{F}^\flat$ is a sheaf.} Analogously to how we proved that $\pgl_s$ is a $J$-sheaf (after Definition \ref{def:continuous}), we can show that $\mathcal{F}^\flat$ is a $J$-sheaf whenever $\mathcal{F}$ is a $J_s$-sheaf. Here we need the assumption that every $J$-cover admits a finite subcover. 

\underline{$\mathcal{G}^\sharp$ is a sheaf.} For $\{ \phi_i: A \to A_{i} \}_{i \in I}$ a $J_s$-covering sieve, the sheaf morphism
\[
\bigsqcup_{i \in I} \trep_s(A_i) \longrightarrow \trep_s A
\]
is an epimorphism, because it is a surjection on stalks. This implies that if $\mathcal{G}$ is a sheaf, then $\mathcal{G}^\sharp$ is a separated presheaf. To show that it is a sheaf, take a matching family of sections
\[
s_i \in \mathcal{G}^\sharp(A_i), \quad\quad i \in I
\]
corresponding to sheaf morphisms
\[
\psi_i : \trep_s A_i \longrightarrow \mathcal{G}.
\]
The fact that $(s_i)_{i \in I}$ is a matching family of sections, translates as follows to a condition on the $\psi_i$'s: if there is a commutative diagram
\[
\begin{tikzcd}[row sep=small]
& A_i \ar[rd] & \\
A \ar[ru,"{\phi_i}"] \ar[rd,"{\phi_j}"'] & & B \\
& A_j \ar[ru] &
\end{tikzcd}
\]
in $\azu$, with $i,j \in I$ and $B$ arbitrary, then the corresponding diagram
\[
\begin{tikzcd}[row sep=small]
& \trep_s A_i \ar[rd,"{\psi_i}"] & \\
\trep_s B \ar[ru] \ar[rd] & & \mathcal{G} \\
& \trep_s A_j \ar[ru,"{\psi_j}"'] &
\end{tikzcd}
\]
commutes as well. In particular, take a diagram
\[
\begin{tikzcd}[row sep=small]
& A_i \ar[rd,"{g}"] & \\
A \ar[ru,"{\phi_i}"] \ar[rd,"{\phi_i}"'] & & B \\
& A_i \ar[ru,"{\mathrm{id}}"'] &
\end{tikzcd}
\]
with $g$ an automorphism of $A_i$ fixing $A$ (or more precisely, fixing $\phi_i(A)$). Then it follows that
\[
\psi_i(g \cdot y) = \psi_i(y) \quad\text{for every section }y\text{ of the sheaf }\trep_s A_i.
\]
We now claim there is a unique
\[
\psi : \trep_s A \longrightarrow \mathcal{G}
\]
extending $\psi_i$ for every $i \in I$. Take a morphism $\wis{spec}(C) \to \trep_s A$ for some commutative ring $E$, given by an embedding
\[
x : A \to \M_s(E).
\]
We can take a $J$-cover $\{ E \to E_m \}_{m \in M}$, and some $\hat{m} \in I$ for every $m \in M$, and a morphism $x_{\hat{m}}$, such that the diagram
\[
\begin{tikzcd}
A \ar[r,"{x|_{E_m}}"] \ar[d,"{\phi_{\hat{m}}}"] & \M_s(E_m) \\
A_{\hat{m}} \ar[ru,"{x_{\hat{m}}}"'] & 
\end{tikzcd}
\]
commutes. Now define $\psi(x|_{E_m}) = \psi_{\hat{m}}(x_{\hat{m}})$. The image $\psi_{\hat{m}}(x_{\hat{m}})$ does not depend on the choice of $x_{\hat{m}}$, because for another choice $x_{\hat{m}}'$, we can ($J$-locally) find an automorphism $g \in \pgl_s(E_m)$ such that 
\[
x_{\hat{m}}' = g \cdot x_{\hat{m}}
\]
(see Proposition \ref{prop:homogeneous}). This $g$ leaves $A$ invariant, so we get
\[
\psi_{\hat{m}}(x_{\hat{m}}') = \psi_{\hat{m}}(g\cdot x_{\hat{m}}) = \psi_{\hat{m}}(x_{\hat{m}}).
\]
So, locally, $\psi(x)$ is uniquely determined. In particular it agrees on intersections, so it is defined globally. This shows that $\mathcal{G}^\sharp$ is a sheaf.

\underline{$1\simeq \sharp \circ \flat$.} By Yoneda Lemma, there is a natural bijective correspondence between elements $s \in \mathcal{F}(A)$ and presheaf morphisms $\mathbf{y}\!A \to \mathcal{F}$, or equivalently $(\mathbf{y}\!A)_{\downarrow s} \to \mathcal{F}$. Applying $\flat$ gives a morphism $\trep_s A \to \mathcal{F}^\flat$. This procedure yields a natural transformation
\[
\eta : \mathcal{F} \to (\mathcal{F}^\flat)^\sharp.
\]
We claim that this is an isomorphism. On  a matrix algebra $\M_n(C)$ with $C$ a commutative ring and $n \mid s$, the natural transformation $\eta: \mathcal{F} \to (\mathcal{F}^\flat)^\sharp$ is given by
\begin{gather*}
\mathcal{F}(\M_n(C)) \stackrel{\eta}{\longrightarrow} \Hom_{\pgl_s}( \trep_s \M_n(C), \mathcal{F}^\flat) \\
y \quad\mapsto\qquad
(\phi_y : \rho \mapsto \rho^* y).
\end{gather*}
For the standard embedding $\rho_{n,s} : \M_n(C) \to \M_s(C)$, we get $\phi_y(\rho_{n,s}) = \rho^*_{n,s}(y)$. Note that the restriction map $\rho^*_{n,s}$ on $\mathcal{F}$ is injective, because $\mathcal{F}$ is a $J_s$-sheaf, and $n \mid s$. So we can recover $y$ from $\phi_y$, in other words $\eta$ is injective.

To show surjectivity, take $\phi : \trep_s \M_n(C) \longrightarrow \mathcal{F}^\flat$ an arbitrary $\pgl_s$-equivariant morphism. Set $y = \phi(\rho_{n,s}) \in \varinjlim_{n \mid k}\mathcal{F}(\M_k(C))$. Because $\phi$ is equivariant, $y$ is invariant under all $g \in \pgl_s(C)$ that leave $\rho_{n,s}$ invariant. So we can interpret $y$ as an element of $\mathcal{F}(\M_n(C))$. It remains to show that $\phi = \phi_y$. Take an arbitrary $\rho: \M_n(C) \to \M_s(E)$ with $E$ some finitely generated commutative ring. Let $f: C \to E$ be the restriction of $\rho$ to $C$. Then $\rho_{n,s} \otimes f: \M_n(C) \to \M_s(E)$ is a center-preserving morphism with the same domain and codomain as $\rho$. Now by Proposition \ref{prop:homogeneous} we can find a Zariski covering $\{h_i: E \to E_i\}$ and automorphisms $g_i \in \pgl_s(E_i)$ such that
\[
\rho|_{E_i} = g_i \cdot (\rho_{n,s} \otimes f_i)
\]
with $f_i = h_i \circ f$. Now we can compute
\begin{gather*}
\phi(\rho|_{E_i}) 
= \phi(g_i \cdot (\rho_{n,s}\otimes f_i))
= g_i \cdot f_i^*\phi(\rho_{n,s}) \\
= g_i \cdot f_i^*y
= (g_i \cdot (\rho_{n,s}))^* y \\
= (\rho|_{E_i})^* y
= \phi_y(\rho|_{E_i}).
\end{gather*}
This shows that $\phi = \phi_y$. So $\eta$ is surjective.

The above shows that $\eta$ is an isomorphism on matrix algebras. From this it easily follows that it is an isomorphism on all Azumaya algebras, because $J_s$ is assumed to be trivializing.

\underline{$\flat \circ \sharp \simeq 1$.} For $\mathcal{G}$ a $\pgl_s$-sheaf, consider the natural transformation
\[
\varepsilon: (\mathcal{G}^\sharp)^\flat \to \mathcal{G}
\]
given by $\varepsilon(\phi) = \phi(\rho_{n,s})$, for
\[
\phi \in (\mathcal{G}^\sharp)^\flat(C) 
= \varinjlim_{n \mid s} \Hom_{\pgl_s}( \trep_s \M_n(C), \mathcal{G}).
\]
It is easy to see that $\phi(\rho_{n,s})$ does not depend on $n$, so $\varepsilon$ is well-defined. For $y \in \mathcal{G}(C)$, we have to show that there is a unique $\phi_y$ such that $y = \varepsilon(\phi_y) = \phi_y(\rho_{n,s})$. Here $n \in \NN_+$ is a natural number such that
\[
g \cdot y|_E = y|_E
\]
for all morphisms $C \to E$ and all elements $g \in \pgl_s(E)$ such that $g \sim_n 1$ (see Definition \ref{def:continuous}). We define $\phi_y$ as follows. Let $\rho: \M_n(C) \to \M_s(E)$ be a center-preserving morphism with $E$ some finitely generated commutative ring, and let $f: C \to E$ be the restriction of $\rho$ to $C$. Then by Proposition \ref{prop:homogeneous}, we can take a Zariski cover $\{ h_i: E \to E_i \}_{i \in I}$ and automorphisms $g_i \in \pgl_s(E_i)$ such that 
\[
\rho|_{E_i} = g_i \cdot (\rho_{n,s} \otimes f_i)
\]
with $f_i = h_i \circ f$. We partially define $\phi_y$ as
\[
\phi_y(\rho|_{E_i}) = g_i \cdot f_i^*y.
\]
This does not depend on the choice of $g_i$. Indeed, if $g_i'$ is another choice, then $g_i' g_i^{-1}\sim_n 1$ by definition, so $g_i \cdot f_i^*y = g_i' \cdot f_i^* y$. In particular, $\phi_y(\rho|_{E_i}) = \phi_y(\rho|_{E_j})$ agree on the intersection, for $i \neq j$. So we can define $\phi_y$ globally. Moreover, because $\phi_y$ has to be equivariant, this is the unique possibility.
\end{proof}

\section*{Acknowledgments}

I would like to thank Lieven Le Bruyn for the interesting discussions and for his comments on draft versions of this paper.

\bibliography{azumaya.bib}{}
\bibliographystyle{amsplain}
\end{document}